\theoremstyle{plain}
\newtheorem{thm}{Theorem}[section]
\newtheorem{lemma}[thm]{Lemma}
\newtheorem{cor}[thm]{Corollary}
\newtheorem{prop}[thm]{Proposition}
\theoremstyle{definition}
\newtheorem{definition}[thm]{Definition}
\theoremstyle{remark}
\newtheorem*{rmk}{Remark}
\theoremstyle{definition}
\newcommand{\Z}{\mathbb Z}
\newcommand{\R}{\mathbb R}
\newcommand{\N}{\mathbb N}
\newcommand{\Gsigma}{\mathcal{G}(\sigma) }
\newcommand{\Gosigma}{\mathcal{G}^0(\sigma) }
\newcommand{\Gssigma}{\mathcal{G}^*(\sigma) }
\newcommand{\Gtsigma}{\mathcal{G}^2(\sigma) }
\newcommand{\Gs}{\mathcal{G}(\tilde{S}) }
\newcommand{\Gss}{\mathcal{G}^*(\tilde{S}) }
\newcommand{\Gts}{\mathcal{G}^2(\tilde{S}) }
\newcommand{\supp}{\mathrm{supp}\, L_\sigma}
\newtheorem*{thmmain}{Theorem~\ref{main theorem}}
\newtheorem*{cormain}{Corollary~\ref{main corollary}}
\begin{document}

\title[Liouville current of quadratic differential metrics]{The Liouville current of holomorphic quadratic differential metrics}
\author{Jiajun Shi}
\address{Mathematisches Institut, Ruprecht-Karls Universität Heidelberg, 69120 Heidelberg, Germany}
\email{jshi@mathi.uni-heidelberg.de}
\keywords{flat metric, length spectrum, holomorphic quadratic differential, Liouville current, lamination}

\begin{abstract}
    In this paper we prove that on a closed oriented surface, flat metrics coming from holomorphic quadratic differentials can be distinguished from other flat cone metrics by the Liouville current.
\end{abstract}

\maketitle

\section{Introduction}

Geodesic currents were introduced by Bonahon in \cite{bonahon1988geometry} to study compactifications of Teichmüller space. Bonahon constructed for every hyperbolic metric a special geodesic current, called the \textit{Liouville current}, and gave an embedding of Teichmüller space into the space of geodesic currents. In \cite{otal1990spectre}, Otal generalized this result and constructed an embedding of the set of negatively curved metrics. After that, it was furtherly generalized to the set of singular metrics, first by Hersonsky and Paulin \cite{hersonsky1997rigidity} to the set of negatively curved cone metrics and then by Bankovic and Leininger \cite{bankovic2018marked} to the set of flat cone metrics. 

A specific class of flat cone metrics arise from holomorphic quadratic differentials. A \textit{holomorphic quadratic differential} $q$ on a Riemannian surface $X$ is a holomorphic section of $T^*X\otimes T^*X$, where $T^*X$ is the holomorphic cotangent bundle. It naturally gives the surface a flat cone structure. Let $QFlat(S)$ be the set of flat cone metrics coming from a holomorphic quadratic differential on $S$. Our main theorem distinguishes $QFlat(S)$ among flat cone metrics from the perspective of Liouville currents.
\begin{thmmain}
Let $(S,\sigma)$ be a closed oriented flat cone surface of genus at least 2. Then $\sigma$ lies in $QFlat(S)$ if and only if the Liouville current is integral and totally simple (see Definition \ref{def_integral} and \ref{def_totally_simple}).
\end{thmmain}

We first show that a flat cone metric is in $QFlat(S)$ if and only if the cone angles are integer multiples of $\pi$ and the holonomy is $\pm\text{Id}$, see Theorem \ref{angle and holonomy}. Thus, our task is to read the information about cone angle and holonomy from the Liouville current. Bankovic and Leininger used a novel method to prove that the cone angles of two flat cone metrics are the same if their Liouville current are the same. We modify this method and manage to compute the cone angle from the Liouville current. The main difficulty is about the holonomy, because the Liouville current is only related to the set of nonsingular geodesics, while the holonomy is about the parallel transport for all closed geodesics. Here we use the trick that the existence of a simple dense geodesic implies the holonomy is $\pm\text{Id}$, so the problem is transformed into how to ensure the existence of a simple dense geodesic. In general, the closure of a simple geodesic admits a foliation structure, which corresponds to a geodesic lamination. The complement of a geodesic lamination is fully classified (Proposition \ref{principal}). We use this to add certain restrictions to Liouville currents to make sure that there exists a geodesic such that the complement of its closure is empty (Theorem \ref{dense}), which implies that it is dense in the surface. These restrictions are summarized by the word "totally simple" in Definition \ref{def_totally_simple}. Moreover, using the property of totally simpleness, we obtain the following interesting result, which roughly says that almost every nonsingular geodesic will be dense in the surface.
\begin{cormain}
    Given a surface with a quadratic differential and any point $p$, there must exist a nonsingular geodesic passing through $p$ which is dense in the surface. Moreover, if we parameterize the space of nonsingular geodesics passing through $p$ by the angle of tangent vector at $p$, then almost every such geodesic will be dense in the surface.
\end{cormain}

The paper is organized as follows. Section 2 outlines the basic properties of flat cone metric and quadratic differentials. In Section 3, we introduce and modify the chain method of \cite{bankovic2018marked} to compute the cone angle of a flat cone metric. In Section 4, we list some useful facts about lamination and foliation, and use them to prove the existence of a dense flat geodesic for integral and totally simple Liouville current. The proof relies on the relations between lamination and foliation, which involves both flat metrics and hyperbolic metrics on a surface. Finally, Section 5 presents how we could determine the holonomy condition using a dense flat geodesic.

\section{Preliminary}

\subsection{Flat cone metric}
A \textit{Euclidean cone} is obtained by gluing in a cyclic pattern a finite number of sectors in the Euclidean plane along the edges and the vertices are glued together. The cone angle of a Euclidean cone is the sum of  sector angles. If the cone angle is $2\pi$, then the cone is just a local Euclidean plane.

A closed oriented surface $(S,\sigma)$ is called a \textit{flat cone surface} if 
\begin{enumerate}
    \item[(1)] every point $p\in S$ has a neighborhood which is isometric to a Euclidean cone of cone angle $\theta(p)$ with $p$ mapped to the cone point
    \item[(2)] $\theta(p)\ge 2\pi$ and equality holds except for only finitely many points.
\end{enumerate}
A point $p\in S$ is called a \textit{cone point} or \textit{singular point} if $\theta(p)> 2\pi$. By definition, cone points form a discrete subset of $S$.

A \textit{geodesic} is a curve locally realizing the minimal distance. For a flat cone surface $(S,\sigma)$, a geodesic $\gamma$ must be a concatenation of Euclidean straight lines, where two adjacent lines meet at a cone point. We can give $\gamma$ a transverse orientation so that we could talk about the positive and negative sides. $\gamma$ makes two angles when it passes through a cone point, one on the positive side and the other on the negative side. Since $\gamma$ is a geodesic, both angles must be no smaller than $\pi$. Conversely, if a curve is a concatenation of some Euclidean lines and make an angle at least $\pi$ at each cone point, then it is a geodesic. A geodesic is called \textit{nonsingular} if it contains no cone points. A geodesic segment is called a \textit{saddle connection} if it connects two cone points and contains no other cone points in the interior.

By \cite[Theorem II.5.5]{bridson2013metric}, $(S,\sigma)$ is a locally CAT(0) space. Thus, its universal covering with induced flat cone structure $(\tilde{S},\sigma)$ is a CAT(0) space. By Cartan-Hadamard theorem, $\tilde{S}$ is homeomorphic to $\R^2$. Two geodesic rays $c_1,c_2:[0,+\infty)\rightarrow \tilde{S}$ are called asymptotic if there is a constant $K$ such that $d(c_1(t),c_2(t))\le K$ for all $t$. Let $S^1_\infty$ be the boundary of $\tilde{S}$, namely the set of geodesic rays up to asymptotic equivalence. Let $\Gsigma$ denote the set of biinfinite geodesics in $\tilde{S}$ equipped with the compact-open topology. Let $\Delta \subset S^1_\infty\times S^1_\infty$ be the diagonal subset, and $\mathcal{G}(\tilde{S}) = (S^1_\infty \times S^1_\infty \setminus \Delta) / (x,y)\sim(y,x)$, i.e., unordered pair of boundary points. There is a natural continuous map 
\[\partial:\Gsigma\to \Gs  \]
which sends a geodesic to two equivalent classes of geodesic rays.

\begin{lemma}\label{closed}
For a flat cone surface $(S,\sigma)$, the map $\partial$ is a closed surjective map.
\end{lemma}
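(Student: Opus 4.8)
The plan is to prove the two assertions separately: surjectivity of $\partial$, and closedness of $\partial$.

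For surjectivity, given an unordered pair of distinct points $\{x,y\} \in \Gs$, I would take a sequence of points $p_n \to x$ and $q_n \to y$ in $\Tilde S$ (using that $S^1_\infty$ is the ideal boundary, so interior points converge to any prescribed boundary point), and for each $n$ let $\gamma_n$ be the geodesic segment from $p_n$ to $q_n$, which exists and is unique because $\Tilde S$ is a Hadamard space (complete, simply connected, nonpositively curved, by Cartan–Hadamard). Using the fact that $\Tilde S$ is a proper geodesic CAT(0) space, these segments subconverge, uniformly on compact sets, to a complete geodesic line $\gamma$; the key point is to check that $\partial\gamma = \{x,y\}$, i.e., that the endpoints do not degenerate. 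This is where the nonpositive curvature is essential: the convexity of the distance function (or a visibility/thin-triangles argument) forces the limit line to actually run from $x$ to $y$ rather than escaping to a single boundary point, because $x \neq y$ and the segments $[p_n,q_n]$ stay within bounded distance of any fixed basepoint only if... — more precisely, I would invoke the standard fact (e.g. Bridson–Haefliger II.9) that in a CAT(0) space any two distinct boundary points are joined by a geodesic line, obtained exactly as such a limit. So surjectivity reduces to a citation plus a short verification that our $\Tilde S$ satisfies the hypotheses.

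For closedness, I would argue that $\partial$ is a proper map and that the target is locally compact Hausdorff, from which closedness follows (proper maps to locally compact Hausdorff spaces are closed). To see properness: let $K \subset \Gs$ be compact; I must show $\partial^{-1}(K) \subset \Gsigma$ is compact in the compact-open topology. A geodesic in $\Tilde S$ with endpoints in a compact subset of $\Gs$ stays, after suitable reparametrisation, within a bounded region near a fixed basepoint — indeed the set of pairs $\{x,y\}$ in $K$ has a positive lower bound on the "visual angle" / a uniform bound keeping the corresponding lines within a fixed ball around a basepoint $o$, again using CAT(0) geometry (the geodesic line from $x$ to $y$ passes within distance $R(K)$ of $o$). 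Then by Arzelà–Ascoli, since geodesics are $1$-Lipschitz and pass through a fixed compact set, any sequence in $\partial^{-1}(K)$ subconverges in the compact-open topology to a geodesic, whose endpoints lie in $K$ by continuity of $\partial$. Hence $\partial^{-1}(K)$ is compact.

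The main obstacle I anticipate is the endpoint non-degeneracy in the surjectivity argument and, relatedly, the uniform bound in the properness argument: one must genuinely use that the metric is \emph{nonpositively curved} (not merely a flat cone metric) — for a general flat cone surface with small cone angles the statement can fail, because geodesic bigons appear and distinct geodesics can share endpoints wildly or a limit of segments can collapse. Concretely, the delicate point is showing that if $\{x_n,y_n\} \to \{x,y\}$ with $x \neq y$, then the geodesics $\gamma_n$ with $\partial\gamma_n = \{x_n,y_n\}$ do not wander off to infinity; this is exactly the "visibility" property of Hadamard surfaces, and I would handle it by the convexity of $t \mapsto d(\gamma_n(t), o)$ together with the fact that on a Hadamard manifold distinct boundary points are "visible" from any compact set with uniform constants. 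Everything else is a packaging of standard CAT(0) facts (Cartan–Hadamard, existence of geodesics between ideal points, Arzelà–Ascoli) with the specific structure recalled earlier in the section.
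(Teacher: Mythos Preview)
Your approach is correct and essentially matches the paper's: surjectivity is handled by citing Bridson--Haefliger (the paper cites Theorem II.9.33 directly), and closedness comes from the visibility fact that geodesics whose endpoint-pairs lie in a compact subset of $\Gs$ all pass through a common compact ball in $\Tilde S$ (the paper cites Ballmann \S4.14 for this), followed by Arzel\`a--Ascoli and continuity of $\partial$. The only cosmetic difference is that the paper phrases closedness sequentially and runs an extra diagonal argument over the countably many cone points to extend the limiting segment to a complete geodesic, whereas your packaging via ``proper $\Rightarrow$ closed'' and a single Arzel\`a--Ascoli on unit-speed parametrisations accomplishes the same thing more directly.
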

\begin{proof}
For surjectivity, see \cite[Theorem II.9.33]{bridson2013metric}. To prove this map is closed, we need to show that for a sequence of geodesics $\{\gamma_n\}$ in $\Gsigma$ whose endpoints converge to some $(x,y)$, there exist a subsequence of $\{\gamma_n\}$ converging to a geodesic with endpoints $(x,y)$. 
Since the endpoints of $\gamma_n$ converge, all $\gamma_n$ will pass through a common compact ball in $\tilde{S}$, see \cite[\textsection 4.14]{ballmann1985manifolds}. Thus we can apply Arzela-Ascoli theorem and find a subsequence of $\{\gamma_n\}$ converging to a geodesic segment $\delta$ in this compact ball. If $\delta$ could be extended to a nonsingular geodesic $\gamma$, we are done. Otherwise, suppose $\delta$ meets a cone point. Take another compact ball containing this cone point and apply Arzela-Ascoli theorem again to find a subsequence whose limit extends $\delta$ through the cone point. Since there are only countably many cone points, we could use the diagonal method and find a subsequence converging to a bi-infinite geodesic $\gamma$ containing $\delta$ as a subsegment. The continuity of $\partial$ implies that the endpoints of $\gamma$ are exactly $(x,y)$.
\end{proof}

It is well-known that the map $\partial$ is a homeomorphism for hyperbolic metrics. But it's not true for flat cone metrics. In the following, we will discuss when it is not injective, namely, two different geodesics are asymptotic in both directions.

Suppose $X$ is hoemomorphic to $\R^2$ and is equipped with a flat cone metric. Take two asymptotic rays $\alpha_1,\alpha_2:[0,\infty)\to X$ with parameter $t$. They could not intersect transversely because $d(\alpha_1(t),\alpha_2(t))$ is a convex function for CAT(0) spaces. Either there is no intersection, or they pass through the same cone point and share a common subray afterwards. In the latter case, these two rays are called \textit{cone-asymptotic}.

\begin{figure}[h]
    \centering
    \begin{tikzpicture}
    %\draw[help lines] (-1,-1) grid (3,1);
    \draw (-1,1) -- (1,0) -- (3.5,0);
    \draw (-1,-1) -- (1,0);
    \filldraw (1,0) circle (2pt);
    \node[above] at (0.5,0.5) {$\alpha_1$};
    \node[below] at (0.5,-0.5) {$\alpha_2$};
    \end{tikzpicture}
    \caption{Cone asymptotic geodeisc rays}
    \label{asymptotic}
\end{figure}
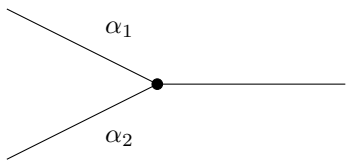

\begin{lemma}\label{asymptotic rays}
If two geodesic rays $\alpha_1,\alpha_2$ in $X$ are asymptotic, then they must be in one of the following cases:
\begin{enumerate}
    \item[(1)]  $\alpha_1$ and $\alpha_2$ are cone-asymptotic.
    \item[(2)] $\alpha_1$ and $\alpha_2$ bound a flat half strip.
    \item[(3)] one ray approaches the other by connecting infinitely many cone points.
\end{enumerate}
\end{lemma}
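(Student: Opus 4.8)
The plan is to run a single dichotomy on the distance between the two rays and read off the three cases from it. As already observed just before Lemma~\ref{orthogonal segment}, two asymptotic rays either pass through a common cone point and coincide from then on, which is case~(1), or else they are disjoint; from now on assume the latter, and extend $\alpha_1,\alpha_2$ to complete geodesics with $\alpha_1(+\infty)=\alpha_2(+\infty)$ and distinct negative endpoints, as in the proof of Lemma~\ref{orthogonal segment}. Since $\Tilde{S}$ is CAT(0), the function $w(t)=d(\alpha_2(t),\alpha_1)$ is convex (distance to a convex set along a geodesic), and it is bounded because the two rays are asymptotic; a bounded convex function on $[0,\infty)$ is non-increasing, and a convex function that is not eventually constant is strictly decreasing. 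I claim the first alternative yields case~(2) and the second yields case~(3).

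Suppose first that $w\equiv c$ on $[T,\infty)$. For $t\ge T$ let $p_t\in\alpha_1$ be the nearest point and $n_t=[\alpha_2(t),p_t]$; the first variation formula gives $w'(t)=0$, so $n_t$ also meets $\alpha_2$ orthogonally, and $n_s\cap n_t=\varnothing$ for $s\ne t$ by the ``triangle with two right angles'' argument from the proof of Lemma~\ref{orthogonal segment}. Choosing $T\le s<t$ so that the four relevant points are non-singular, the segments $n_s,n_t$ and the subarcs of $\alpha_2,\alpha_1$ between their feet bound an embedded quadrilateral with four right angles; Gauss--Bonnet then forces the concentrated curvature in its interior and the angle defect of any cone point on its boundary arcs to vanish, so it is a Euclidean rectangle of height $c$ with no cone points at all. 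Gluing these rectangles along the $n_t$ and letting $t\to\infty$ exhibits an isometrically embedded flat half-strip $[T,\infty)\times[0,c]$ whose sides are subrays of $\alpha_1$ and $\alpha_2$; this is case~(2). (One could instead quote the flat strip theorem of \cite{bridson2013metric} once the geometry is set up this way.)

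Now suppose $w$ is strictly decreasing. Let $R\subseteq\Tilde{S}$ be the closed region cut off by $\alpha_1$, $\alpha_2$ and a geodesic segment joining their starting points. On a two-dimensional CAT(0) surface a complete geodesic separates the space into two closed convex subsets (two geodesics meet in at most a subsegment), so $R$, being an intersection of two of these, is convex. For each $t$ extend the segment $n_t$ above to a complete geodesic and let $R_t\subseteq R$ be the convex ``far'' region it cuts off, bounded by $n_t$ and the subrays of $\alpha_1,\alpha_2$ past $p_t$ and $\alpha_2(t)$. If some $R_t$ contained no cone point it would be flat and convex, hence develop isometrically onto a region of $\R^2$ bounded by two disjoint straight rays at bounded distance together with a segment; two such rays must be parallel, which forces $w$ to be constant on $[t,\infty)$, a contradiction. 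Hence every $R_t$ contains a cone point; taking one in each of a nested cofinal family $R_{t_1}\supseteq R_{t_2}\supseteq\cdots$ and using that the $R_t$ shrink toward the single ideal point $\alpha_2(+\infty)$, we obtain infinitely many cone points accumulating only at $\alpha_2(+\infty)$.

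It remains to see that these cone points may be taken on one of the two rays, so that one of them is eventually a concatenation of infinitely many saddle connections limiting onto the other --- this is case~(3), and the delicate point of the lemma. My plan is to pass to the convex hull $C=\mathrm{Conv}\bigl(\alpha_2\cup\{\text{cone points of }R\}\bigr)\subseteq R$, whose boundary consists of $\alpha_2$ together with a path $P$ on the $\alpha_1$-side joining $\alpha_2(0)$ to $\alpha_2(+\infty)$. Because $C$ is convex, $P$ is a genuine geodesic of $\Tilde{S}$ except at the extreme points of $C$, which are cone points, and a counting argument shows $P$ must turn at infinitely many of them converging to $\alpha_2(+\infty)$: if it turned at only finitely many it would be an eventual non-singular geodesic ray bounding with $\alpha_2$ a convex region whose interior contains no further cone points, contradicting that infinitely many cone points of $R$ lie off $\alpha_2$. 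The main obstacle is precisely this last identification: one must control the convex-hull boundary $P$ tightly enough, using the nonpositive curvature, to recognise it as a subray of one of the two given rays (after possibly interchanging their roles), which is exactly what case~(3) asserts.
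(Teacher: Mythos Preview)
Your dichotomy on the distance function $w$ and the treatment of the flat half-strip case are sound, and the argument that every far region $R_t$ must contain a cone point is correct (provided ``contains no cone point'' is read as none in the closure). The genuine gap is exactly the one you flag at the end: there is no mechanism by which the convex-hull boundary $P$ is forced to coincide with a subray of $\alpha_1$, and in general it will not. The lemma asserts that one of the \emph{given} rays carries infinitely many cone points; manufacturing a third geodesic $P$ with that property does not establish case~(3).

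The paper closes this gap by a different and more decisive route. Using the orthogonal segments $\beta_t$ of Lemma~\ref{orthogonal segment}, let $\theta_t$ be the angle $\beta_t$ makes with $\alpha_1$; one checks $\theta_t\ge\tfrac{\pi}{2}$, with equality forcing the flat half-strip. In the remaining case, assume \emph{for contradiction} that both rays contain only finitely many cone points, and pass to subrays so that neither contains any. Gauss--Bonnet on the doubled quadrilateral bounded by $\beta_{t_0},\beta_t,\alpha_1,\alpha_2$ gives
\[
\theta_t=\theta_{t_0}+\tfrac12\sum_i\kappa_i,
\]
so $\theta_t$ is a non-increasing step function in $t$ which must tend to $\tfrac{\pi}{2}$. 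Here is the point you are missing: the interior cone points are lifts of the \emph{finitely many} cone points of the compact surface $S$, so the curvatures $\kappa_i$ take only finitely many values. A step function decreasing toward $\tfrac{\pi}{2}$ by jumps drawn from a finite set bounded away from zero must reach $\tfrac{\pi}{2}$ in finitely many steps, contradicting $\theta_t>\tfrac{\pi}{2}$ for all $t$. Thus at least one ray carries infinitely many cone points, which is case~(3). The finiteness of the set of possible cone angles is what pins the singularities onto the rays rather than merely into the region between them; your convex-hull construction has no access to this information.
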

\begin{proof}
Suppose two rays are not cone-asymptotic. Let $\alpha_2$ is parametrized by $t\in[0,+\infty)$. Denote by $\beta_t$ the geodesic orthogonal to $\alpha_2$ at $\alpha_2(t)$. Since there are only countably many cone points, there is a unique $\beta_t$ for almost all $t$. Note that for large enough $t$, $\beta(t)$ will definitely intersect $\alpha_1$ at some point. Let $\theta_t$ be the angle formed by $\beta_t$ and $\alpha_1$ as is shown in Figure \ref{varying}. If $\theta_t>\frac{\pi}{2}$ for some $t$, then $\alpha_1$ and $\alpha_2$ cannot limit to the same endpoint by using comparison law in CAT(0) spaces.

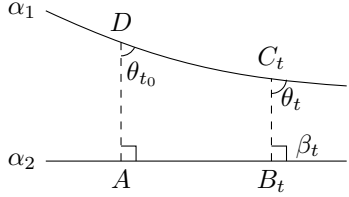
\begin{figure}[h]
    \centering
    \begin{tikzpicture}
    %\draw[help lines] (-2,0) grid (2,3);
    \draw (-2,2)..controls (-0.5,1.3) and (0.5,1.1).. (2,1);
    \draw (-2,0)--(2,0);
    \draw[dashed] (-1,0)--(-1,1.6);
    \draw[dashed] (1,0)--(1,1.1);
    \draw (-1,0.2)--(-0.8,0.2)--(-0.8,0) (1,0.2)--(1.2,0.2)--(1.2,0);
    \draw (1,0.9) arc [radius=0.2,start angle=-90,end angle=-5];
    
    \node[left] at(-2,0) {$\alpha_2$};
    \node[left] at (-2,2) {$\alpha_1$};
    \node[below right] at (1,1.1) {$\theta_t$};
    \node[right] at (1.2,0.2) {$\beta_t$};
    \node[below] at (-1,0) {$A$};
    \node[below] at (1,0) {$B_t$};
    \node[above] at (1,1.1) {$C_t$};
    \node[above] at (-1,1.6) {$D$};
    
    \draw (-1,1.4) arc [radius=0.2,start angle=-90,end angle=-25];
    \node[below right] at (-1.05,1.45) {$\theta_{t_0}$};
    \end{tikzpicture}
    \caption{Two asymptotic rays and $\theta_t$}
    \label{varying}
\end{figure}

Now assume that $\theta_t\le\frac{\pi}{2}$ for all $t$. Fix one $\beta_{t_0}$ and take a $\beta_t$ for some $t>t_0$. $\beta_{t_0},\beta_t,\alpha_1,\alpha_2$ enclose a quadrilateral $AB_tC_tD$. Glue it with another identical quadrilateral along vertices and edges, we get a flat cone surface homeomorphic to a sphere. The cone points of this sphere consist of the original cone points inside the polygon, cone points in the edges and the vertices $A,B_t,C_t,D$.

Suppose there are only finitely many cone points in both $\alpha_1$ and $\alpha_2$. Then we can always replace $\alpha_i$ by shorter subrays and assume there is no cone point in both rays. Denote the cone points inside the polygon $AB_tC_tD$ by $p_1,\ldots,p_{N_t}$. Each cone point $p_i$ contributes a concentrated curvature $\kappa_i=2\pi-\theta(p_i)<0$, $\kappa_A=\kappa_{B_t}=\pi, \kappa_{C_t}=2\pi-2\theta_t, \kappa_{D}=2\theta_{t_0}$. Apply Gauss-Bonnet theorem, we get
\[\theta_t=\theta_{t_0}+\sum_{i=1}^{N_t}\kappa_i\]

Thus $\theta_t$ is a nondecreasing step function with respect to $t$. Since $\theta_t$ is bounded above by $\frac{\pi}{2}$, $\theta_t$ converges as $t$ goes to infinity and therefore, $\kappa_i$ converges to $0$. Note that these cone points in the polygon come from the lift of finitely many cone points in the surface $S$, so $\kappa_i$ could only take finitely many values, which forces $\kappa_i=0$ for almost all $i$. Therefore, up to taking subrays, $\alpha_1$ and $\alpha_2$ bound a flat region, and it must be a flat half strip. In particular, $\theta_t=\frac{\pi}{2}$ for all large enough $t$. 

The remaining case is that there are infinitely many cone points in at least one of the geodesic rays $\alpha_1,\alpha_2$. For a cone point $p$ in the edges of the polygon, it could contribute a curvature arbitrarily close to 0, which makes it possible that $\theta_t$ converges to $\frac{\pi}{2}$ and $\theta_t\ne \frac{\pi}{2}$.
\end{proof}

Note that the third case happens if and only if the cone angle could take infinitely many different values. This is impossible if $X$ is the universal covering of a flat cone surface. We obtain the following corollary.

\begin{cor}\label{flat_strip}
Suppose $\tilde(S)$ is the universal covering of a flat cone surface. If two geodesics $\gamma_1,\gamma_2$ in $\tilde{S}$ limit to the same pair of endpoints, then they must bound a flat strip, which is isometric to $\R\times [a,b]$ for some interval $[a,b]$.
\end{cor}
\begin{proof}
Since $\gamma_1$ and $\gamma_2$ are asymptotic in both directions, it's just the combination of three cases in Lemma \ref{asymptotic rays}. Moreover, because of the statement above, the only possible combination is (1)-(1), (1)-(2) or (2)-(2). Case (1)-(1) is impossible because there is at most one geodesic between two cone points. (1)-(2) is also impossible because there is a one-to-one correspondence between $S^1_\infty$ and geodesic rays starting from a cone point. Thus (2)-(2) is the only possible case, namely these two geodesics must bound a flat strip.
\end{proof}

Let $\mathcal{G}^0(\sigma)\subset\Gsigma$ be the set of nonsingular geodesics, and $\mathcal{G}^*(\sigma)=\overline{\Gosigma}$. In the following chapters, we will focus on $\Gssigma$ and its subsets.

\begin{prop}\label{saddle}
A geodesic $\gamma$ is contained in $\Gssigma$ if and only if
\begin{enumerate}
    \item[(1)] $\gamma$ makes an angle $\pi$ on one side at each cone point.
    \item[(2)] ordering the cone points linearly along $\gamma$, the side where the angle is $\pi$ switches at most once.
\end{enumerate}
\end{prop}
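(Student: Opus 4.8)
\subsection*{Proof proposal}

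The plan is to prove both implications by a local analysis near the cone points of $\gamma$ in the universal cover $\Tilde{S}$.

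For the ``only if'' direction I would start from a sequence of nonsingular geodesics $\gamma_n\to\gamma$, with parametrisations chosen so that $\gamma_n\to\gamma$ uniformly on compact parameter intervals. Fix a cone point $p=\gamma(t_0)$ and let $\theta_1,\theta_2\ge\pi$ be the two angles $\gamma$ makes there, so $\theta_1+\theta_2=\theta(p)>2\pi$. For small $\epsilon$ the geodesic $\gamma$ enters and leaves the ball $B(p,\epsilon)$ along the two rays issuing from $p$, which cut $\partial B(p,\epsilon)$ into a ``$\theta_1$-arc'' and a ``$\theta_2$-arc''. For large $n$ the piece $\gamma_n|_{[t_0-\delta,t_0+\delta]}$ is a geodesic segment inside $B(p,\epsilon)$ avoiding $p$, with endpoints converging to the two rays; unrolling the Euclidean cone, a straight segment not through the apex cannot join a point near one edge of a sector of angle $>\pi$ to a point near the other edge while staying in that sector. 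Hence the side on which $\gamma_n$ rounds $p$ — say the $\theta_i$-side — satisfies $\theta_i\le\pi$, so $\theta_i=\pi$: this is (1). The same analysis shows $\gamma_n$ lies strictly on the $\pi$-side of $\gamma$ near $p$, equivalently $p$ lies strictly on the opposite side of the line $\gamma_n$. If the $\pi$-side switched at least twice along $\gamma$, there would be cone points $q_1<q_2<q_3$ on $\gamma$ with $\pi$-sides left, right, left (or the reverse); for large $n$ the embedded line $\gamma_n$ would then separate $q_1$ from $q_2$ and $q_2$ from $q_3$, so $\gamma$ would meet $\gamma_n$ at a point $a\in\gamma|_{(q_1,q_2)}$ and at a point $b\in\gamma|_{(q_2,q_3)}$, with the cone point $q_2$ strictly between $a$ and $b$ on $\gamma$. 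Since $\Tilde{S}$ is CAT(0), the geodesic segment from $a$ to $b$ is unique, forcing $\gamma|_{[a,b]}=\gamma_n|_{[a,b]}$; this is absurd because $\gamma_n$ is nonsingular while $\gamma|_{[a,b]}$ contains $q_2$. Hence (2).

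For the converse, assume $\gamma$ satisfies (1) and (2); we may assume $\gamma$ has a cone point, since otherwise $\gamma\in\Gosigma$ already. For each $n$ I would look at $\gamma|_{[-n-1,n+1]}$, which meets finitely many cone points and along which the $\pi$-sides switch at most once. By (1), near each such cone point a thin one-sided neighbourhood of $\gamma$ on its $\pi$-side is a flat half-disc having $\gamma$ as a diameter, and along each saddle connection between consecutive cone points $\gamma$ sits in a flat strip; gluing these together produces a flat region $W_n\supset\gamma|_{[-n-1,n+1]}$ in which the angle-$\pi$ condition makes $\gamma$ a straight arc running along $\partial W_n$ (it enters the interior only across the segment where a switch occurs, if any). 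Inside $W_n$ I would then take a straight segment $\sigma_n^{v}$ at small offset $v$ from $\gamma$: parallel to $\gamma$ when there is no switch in this range, and — when the switch occurs on the segment $e$ joining the last ``left'' cone point to the first ``right'' one — a line of small slope crossing $\gamma$ exactly once inside $e$, so that $\sigma_n^{v}$ stays on the $\pi$-side of every cone point. Each $\sigma_n^{v}$ is a nonsingular geodesic segment, and it lies within $o(1)$ of $\gamma|_{[-n,n]}$ as $v\to 0$.

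It remains to extend $\sigma_n^{v}$ to a bi-infinite nonsingular geodesic. The segments $\{\sigma_n^{v}\}_v$ form a family of parallel straight lines in $W_n$; extending each of them straightforwardly as far as it remains nonsingular gives rays issuing, in one common direction $D_0$, from a geodesic segment $\tau$ transverse to $D_0$ and met once by each $\sigma_n^{v}$. A fixed cone point $q$ can lie on the maximal nonsingular extension of at most one $\sigma_n^{v}$: two straight rays reaching $q$ with common incoming direction $D_0$ and no earlier cone point would, continued backwards from $q$, lie on a single line meeting $\tau$ in the same point. As $\Tilde{S}$ has only countably many cone points, for all but countably many $v$ this extension $\gamma_n^{v}$ is a bi-infinite nonsingular geodesic; picking such $v=v_n\to 0$ yields $\gamma_n:=\gamma_n^{v_n}\in\Gosigma$ with $\gamma_n\to\gamma$ in the compact-open topology, whence $\gamma\in\overline{\Gosigma}=\Gssigma$. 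I expect the genuine difficulties to be the cone-chord angle estimate in the first direction and, in the converse, the bookkeeping that glues the local pieces into a flat region $W_n$ on whose boundary $\gamma$ is straight, together with the genericity argument keeping the extension cone-point-free.
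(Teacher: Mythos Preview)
Your forward direction is essentially the paper's argument, only spelled out more carefully: the paper just says that at each cone point some subsequence approaches from one side, forcing that side to have angle $\pi$, and that two or more switches would make $\gamma_n$ cross $\gamma$ at least twice, which is impossible in a simply connected CAT(0) space. Your cone-unrolling estimate and the CAT(0) uniqueness argument for the segment $[a,b]$ are clean ways to make those two sentences rigorous.

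For the converse you take a genuinely different and heavier route than the paper. The paper does not build any flat region $W_n$ or parallel family: it simply picks a nonsingular point $p$ on the saddle connection where the $\pi$-side switches (any subarc if there is no switch), lets $v\in T_p\Tilde S$ be the tangent to $\gamma$, and takes geodesics through $p$ with directions $v_n\to v$. Since only countably many directions at $p$ produce singular geodesics, one can choose the $v_n$ generically; and because $p$ sits on the switching segment, a small rotation sends the new geodesic to one side of $\gamma$ before $p$ and to the other side after $p$, i.e.\ to the $\pi$-side at every cone point, so $\gamma_n\to\gamma$ on compacts. This avoids all the gluing of half-discs and strips and the separate extension step.

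Your version also works, but there is a small gap in the countability step. The sentence ``two straight rays reaching $q$ with common incoming direction $D_0$ \dots'' assumes that two nonsingular rays that leave $\tau$ in the same direction arrive at $q$ with the same tangent. That is only true if the triangle they bound contains no cone point; if it does, holonomy around that cone point makes the two parallel transports of $D_0$ differ at $q$, so the backward-continuation argument breaks. The conclusion you want is still correct, but for a different reason: in CAT(0) the function $t\mapsto d(\sigma_n^{v}(t),\sigma_n^{v'}(t))$ is convex and equals $|v-v'|$ on the flat region $W_n$, hence is $\ge |v-v'|>0$ for all $t$, so the two extensions never meet and in particular cannot share a cone point. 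Replace your direction argument by this convexity (or an equivalent Gauss--Bonnet count on the would-be triangle) and the proof goes through.
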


\begin{proof}
$\Rightarrow$: Suppose $\{\gamma_n\}$ is a sequence of nonsingular geodeiscs approximating $\gamma$. Then at each cone point, there is a subsequence approaching $\gamma$ from either the positive side or the negative side. The cone angle at this side is exactly $\pi$. 

Now suppose the side where the angle is $\pi$ switches at least twice. Then for sufficiently large $n$, as is shown in Figure \ref{twice}, $\gamma_n$ crosses $\gamma$ at least twice, which is impossible for a CAT(0) space.
\begin{figure}[h]
    \centering
    \begin{tikzpicture}
    %\draw[help lines] (-5,0) grid (5,3);
    \draw (-4,1).. controls (-1,0) and (1,0).. (4,1);
    \draw[very thick] (-4,0.5)-- (-2,0)-- (0,1) -- (2,0) -- (4,0.5);
    \draw (-2.194,0.05) arc [radius=0.2, start angle=166, end angle=27];
    \draw (-0.18,0.91) arc [radius=0.2, start angle=-153, end angle=-27];
    \draw (2.194,0.05) arc [radius=0.2, start angle=14, end angle=153];
    \node [above] at (-2.2,0.15) {$\pi$};
    \node [below] at (0,0.8) {$\pi$};
    \node [above] at (2.2,0.15) {$\pi$};
    \node at (-3,1) {$\gamma_n$};
    \node[below] at (-4,0.5) {$\gamma$};
    
    \node[below] at((-2,0) {$\xi_1$};
    \node[above] at(0,1) {$\xi_2$};
    \node[below] at (2,0) {$\xi_3$};
    
    \filldraw (-2,0) circle (2pt) (0,1) circle (2pt) (2,0) circle (2pt);
    \end{tikzpicture}
    \caption{The side where the angle is $\pi$ switches twice when $\gamma$ goes from $\xi_1$ to $\xi_3$.}
    \label{twice}
\end{figure}
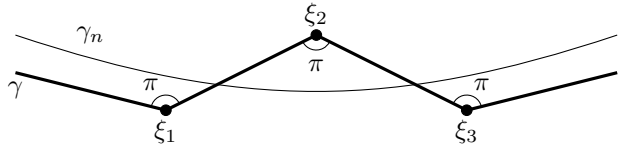

$\Leftarrow$: It suffices to construct a sequence of approximating nonsingular geodesics. Suppose $\gamma(t)$ is a geodesic and $\{ \xi_i | -\infty\le i \le +\infty \}$ is the set of linearly ordered cone points on $\gamma$. Suppose $\gamma$ make an angle $\pi$ at the positive side of $\xi_i$ for $i\le N$ and at the negative side of $\xi_i$ for $i\ge N+1$. Take the saddle connection joining $\xi_N$ and $\xi_{N+1}$ (if $N=\pm\infty$, just take any of the saddle connections) and choose a nonsingular point $p$ on it. It´s easy to construct a geodesic which pass through $\gamma$ at $p$ from the positive side to the negative side and form an angle $\theta$ with $\gamma$. Find a sequence of such geodesics $\gamma_n$ such that $\lim \theta_n=0$. Since there are only countably many cone points in the universal covering, we may assume all $\gamma_n$ are nonsingular by carefully choosing the angles $\theta_n$. It´s clear from the construction that such a sequence will converge to $\gamma$.
\end{proof}

Let $\Gtsigma$ be the subset of geodesics in $\Gssigma$ which contain at least two cone points. By Proposition \ref{saddle}, there are at most countably many such geodesics containing a fixed saddle connection. Since there are only countably many saddle connections, we conclude that $\Gtsigma$ contains at most countably many geodesics.

Define $\Gss$ (resp. $\Gts$) to be the image of $\Gssigma$ (resp. $\Gtsigma$) under the map $\partial:\Gsigma\to\Gs$. By the argument above, there are only countably many elements in $\Gts$.

\subsection{Holomorphic quadratic differential}\label{holonomy and cone angle}
A \textit{holomorphic quadratic differential} $q$ on a Riemannian surface $X$ is a holomorphic section of $T^*X\otimes T^*X$, where $T^*X$ is the holomorphic cotangent bundle. In a local coordinate $z$, $q=f(z)\,dz^2$ for some holomorphic function $f(z)$, see \cite[\textsection 4.1]{strebel1984quadratic} or \cite[§2.1]{duchin2010length}.

A holomorphic quadratic differential $q$ naturally gives a flat cone metric by constructing standard local coordinates: take a point $p\in X$ and a local coordinate $z$ near $p$ such that $q=f(z)\,dz^2$ and $p$ corresponds to $z=0$.
\begin{enumerate}
    \item[(1)] If $f(0)\ne 0$, then there exists a neighborhood $U$ of $0$ such that $f(z)\ne 0 $ for all $z\in U$. Integrate one branch of $\sqrt{q}$, we get a new local coordinate $\xi$ and $q=d\xi^2$. Moreover, for another local coordiante $\xi'$ satisfying $q=d\xi'^2$, $\xi'=\pm\xi+\mathrm{constant}$. Such coordinates lead to a well-defined Euclidean metric because the local metric is preserved by transition map.
    \item[(2)] If $f(0)=0$ and $0$ is of order $k$, $k\in \N$, then there exists another coordinate $w$ such that $q=w^kdw^2$. Let $\eta = \frac{2}{k+2}w^{\frac{k+2}{2}}$, then locally $q=d\eta^2$. We can see that $p$ is a cone point of cone angle $(k+2)\pi$. See Figure \ref{trajectory} for an example.
    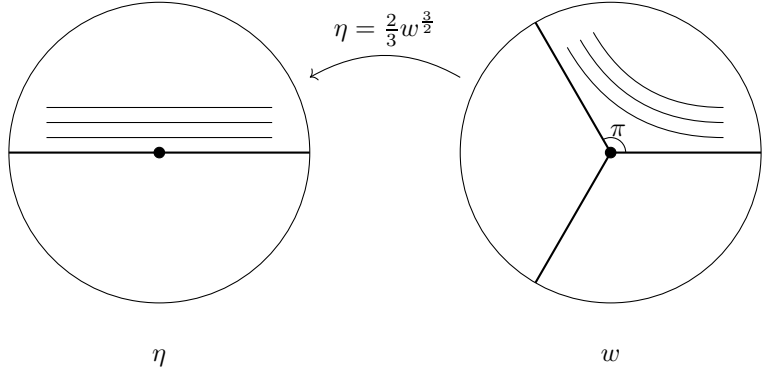
\begin{figure}[h]
    \centering
    \begin{tikzpicture}
    %\draw[help lines] (-5,-2) grid (5,2);
    \draw (-3,0) circle (2) (3,0) circle (2);
    
    \draw[thick] (-5,0)--(-1,0);
    \draw (-4.5,0.2)--(-1.5,0.2) (-4.5,0.4)--(-1.5,0.4) (-4.5,0.6)--(-1.5,0.6);
    \filldraw (-3,0) circle (2pt);
    
    \draw[thick] (3,0)--(5,0) (3,0)--(2,1.732) (3,0)--(2,-1.732);
    \filldraw (3,0) circle (2pt);
    \draw (2.4232,1.4) to [out=-60, in=180] (4.5,0.2) (2.5964,1.5) to [out=-60, in=180] (4.5,0.4) (2.7696,1.6) to [out=-60, in=180] (4.5,0.6);
    
    \draw[->] (1,1) to [out = 150, in= 30] (-1,1);
    \node[above] at (0,1.3) {$\eta = \frac{2}{3}w^{\frac{3}{2}}$};
    
    \node[below] at (-3,-2.5) {$\eta$};
    \node[below] at (3,-2.5) {$w$};
    
    \draw (3.2,0) arc [radius=0.2, start angle=0, end angle=120];
    \node[above] at (3.1,0.1) {$\pi$};
    \end{tikzpicture}
    \caption{An example for $k=1$, where the cone angle is $3\pi$.}
    \label{trajectory}
\end{figure}
\end{enumerate}

The geodesic of such a flat cone metric $\sigma$ is closely related to the holomorphic quadratic differential $q$. Following \cite[§5.5]{strebel1984quadratic}, a straight arc is a smooth curve along with 
$$\arg q=\theta=\text{constant}\quad 0\le\theta<2\pi$$
A maximal straight arc of $\arg q=\theta$ is called a $\theta$-\textit{trajectory}. In fact, a geodesic is exactly the concatenation of some trajectories, possibly of different $\theta$. But for a geodesic in $\Gssigma$, since it makes an angle $\pi$ at each cone point, it is composed of trajectories with the same $\theta$. In particular, if a geodesic in the surface is the projection of an element in $\Gssigma$, it must be simple, as a self-intersection implies that it contains two trajectories of different $\theta$.
  
But conversely, not all flat cone metrics come from a holomorphic quadratic differential. In the remaining of this subsection, we will use cone angle and holonomy to distinguish quadratic differential metrics among general flat cone metrics.

Recall that for a Riemannian surface, the holonomy is defined as the set of parallel transport along all closed curves with a fixed basepoint. For a flat cone surface whose cone angles are all integer multiples of $\pi$, we can still define holonomy up to $\pm\text{Id}$ in the following way: fix a nonsingular basepoint, for a nonsingular closed curve, the parallel transport is defined in the normal way; for a singular closed curve $\gamma$, find a sequence of nonsingular closed curves $\gamma_n$ approaching it, choose a convergent subsequence $P_{\gamma_n}$ and define $P_\gamma$ as the limit of $P_{\gamma_n}$.

To verify it is well-defined, it suffices to show that different subsequences give the same result. Since the metric is locally flat, the parallel transport is invariant along homotopy involving no cone points. For sufficiently large $m$ and $n$, $\gamma_m*\overline{\gamma_n}$ is either nullhomotopic or homotopic to a closed curve enclosing some cone points. For the latter case, since the cone angles are integer multiples of $\pi$, $P_{\gamma_m*\overline{\gamma_n}}=\pm\text{Id}$ and therefore $P_{\gamma_m}=\pm P_{\gamma_n}$.

\begin{thm}\label{angle and holonomy}
A flat cone metric on the surface $S$ is in $QFlat(S)$ if and only if
\begin{enumerate}
    \item[(1)] the cone angle of any cone point is of the form $k\pi$ for some integer $k\ge3$.
    \item[(2)] $\mathrm{Hol}(S) = \pm \mathrm{Id}$.
\end{enumerate}
\end{thm}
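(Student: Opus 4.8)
The plan is to establish both directions by relating the local normal forms of a holomorphic quadratic differential to the two stated conditions, and conversely reconstructing a quadratic differential from a metric satisfying them. For the ``only if'' direction, suppose the flat cone metric comes from a holomorphic quadratic differential $q$. Condition (1) is essentially already contained in the discussion preceding the theorem: near a zero of order $k$ the standard coordinate $\eta=\tfrac{2}{k+2}w^{(k+2)/2}$ exhibits the cone angle as $(k+2)\pi$, so the cone angle is an integer multiple of $\pi$; since the metric is assumed nonpositively curved, the cone angle is at least $2\pi$, and since a cone angle of exactly $2\pi$ means the point is not a cone point, any genuine cone point has angle $k\pi$ with $k\ge 3$. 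For condition (2), I would observe that on each standard chart $q=d\xi^2$ and the transition maps between overlapping standard charts are of the form $\xi'=\pm\xi+\text{const}$; hence the tangent vector $\partial_\xi$ is globally well-defined up to sign, which is exactly the statement that parallel transport around any loop — equivalently the holonomy representation into $O(2)$ — takes values in $\{\pm\mathrm{Id}\}$. (One must also check that transport around a small loop enclosing a cone point of angle $k\pi$ is $(-1)^k\mathrm{Id}$, which is consistent since rotation by $k\pi$ is $\pm\mathrm{Id}$; this is what makes the holonomy well-defined on singular curves as noted in the text.)

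For the ``if'' direction, assume the metric $\sigma$ satisfies (1) and (2). The idea is to build, away from the cone points, a holomorphic $1$-form-squared from the flat structure, and then show it extends holomorphically across the cone points with a zero of the right order. Away from the singular set $\Sigma$, the surface carries a Euclidean structure, i.e.\ an atlas of charts $\xi_\alpha$ with transitions that are Euclidean isometries; condition (2) says these transitions, after possibly composing with the global orientation, lie in the group generated by translations and $\xi\mapsto-\xi$. Therefore $d\xi_\alpha^2$ is independent of $\alpha$ on overlaps and defines a holomorphic quadratic differential $q_0$ on $S\setminus\Sigma$, nowhere vanishing there, with $|q_0|$ equal to the flat metric. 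It remains to extend $q_0$ over each cone point $p$ with cone angle $k\pi$. Near $p$ choose the cone coordinate in which a punctured neighborhood is isometric to the $k\pi$-cone minus its apex; pulling back $q_0$ and using that the cone of angle $k\pi$ is, via $w\mapsto \tfrac{2}{k}w^{k/2}$ read backwards, modeled by $w^{k-2}dw^2$ on a disc, one sees $q_0 = (\text{unit})\cdot w^{k-2}dw^2$ in a suitable conformal coordinate $w$ centered at $p$. By Riemann's removable singularity theorem applied to the bounded holomorphic function $q_0/(w^{k-2}dw^2)$ on the punctured disc, $q_0$ extends holomorphically across $p$ with a zero of order $k-2\ge 1$. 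Gluing these local extensions gives a global holomorphic quadratic differential $q$ on $S$ whose associated flat cone metric is $\sigma$.

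The step I expect to be the main obstacle — and the one needing the most care — is the extension across cone points: specifically, showing that the conformal coordinate $w$ can be chosen so that $q_0$ has the clean form $(\text{nonvanishing holomorphic})\cdot w^{k-2}\,dw^2$, i.e.\ that the flat cone coordinate and the underlying conformal coordinate are compatible in the expected way. This requires checking that the natural conformal structure in which $q_0$ is holomorphic genuinely extends over $p$ as a smooth Riemann surface structure (so that $S$ is a Riemann surface and $q$ a section of $(T^*S)^{\otimes 2}$ in the usual sense), and then that $q_0$, being bounded with respect to a local holomorphic coordinate raised to the power $k-2$, has no essential singularity. A secondary subtlety is bookkeeping of the $\pm\mathrm{Id}$ ambiguity: $q$ is a quadratic differential, not a $1$-form, so one only needs the transition data to be well-defined up to sign, which is exactly what condition (2) provides; conversely one should confirm that condition (2), as stated with $\mathrm{Hol}(S)=\pm\mathrm{Id}$ including singular loops, is precisely the obstruction to globalizing $d\xi^2$ and not merely $d\xi$.
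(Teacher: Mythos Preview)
Your proposal is correct and follows essentially the same route as the paper. The ``only if'' direction is identical; for ``if'', both you and the paper build $q=d\xi^2$ on $S\setminus\Sigma$ from the half-translation atlas guaranteed by the holonomy condition, then extend over each cone point via the explicit branched coordinate $\xi=\tfrac{2}{k}w^{k/2}$ giving $q=w^{k-2}\,dw^2$ --- the paper just writes this coordinate down directly, whereas you phrase the same extension through Riemann's removable singularity theorem, but the underlying computation is the same (and your worry about the conformal structure extending over the apex is resolved by exactly that coordinate).
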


\begin{proof}
$\Rightarrow$: The cone angle is already discussed in the construction of local charts. Since the transition function of charts is $\xi' = \pm \xi + \mathrm{constant}$, $P_\gamma=\pm\text{Id}$ for nonsingular closed curve $\gamma$. Then we know from the definition of generalized holonomy that $\text{Hol}(S)=\pm\text{Id}$.

$\Leftarrow$: Fix one Euclidean chart containing a nonsingular point. Construct other Euclidean charts by requiring that the chart transition function is $\xi' = \pm \xi + \mathrm{constant}$. This construction is possible because of the holonomy condition. These Euclidean charts also give a compatible local complex coordinates. We can define a holomorphic quadratic differential form on nonsingular points, whose local form is $q = dz^2$. This quadratic form has a natural extension to cone points: if $\theta(p) = (k+2)\pi$, construct $w$ satisfying $z = \frac{2}{k+2}w^\frac{k+2}{2}$, then $w$ gives a coordinate near the cone point $p$ and $q = w^k dw^2$.
\end{proof}

\section{Chains of a Liouville current}

\subsection{Liouville current and chains}\label{computation_section}

In this subsection, we will first give a brief introduction to the Liouville current. For more details, see \cite[\textsection 8.2]{martelli2016introduction}, \cite[\textsection 4.2]{frazier2012length}.

A \textit{geodesic current} is a Radon measure on $\mathcal{G}(\tilde{S})$ which is $\pi_1(S)-$invariant. For example, given a closed curve $\gamma$ which is not nullhomotopic, the endpoints of its preimages in $\tilde{S}$ is a discrete and $\pi_1(S)-$invariant subset of $\mathcal{G}(\tilde{S})$. The counting measure on this set is exactly a geodesic current, which is still denoted by $\gamma$. 

Given a flat surface $(S,\sigma)$, the \textit{Liouville current} $L_\sigma$ is a special kind of geodesic current defined in the following way: consider the subset $\hat{S} = S \setminus \{\mathrm{cone\ points}\}$, its unit tangent bundle $T^1\hat{S}$ has a volume form which has a local expression $\frac{1}{2} dA\,d\theta$, where $dA$ is the pullback of area form of $\hat{S}$ and $d\theta$ is the pullback of angle form on the fibres. Take the interior product of this 3-form with the unit vector field generating the geodesic flow, we get a 2-form which is geodesic flow invariant. Now lift it to the universal covering $\tilde{S}$ and restrict to complete flows, its absolute value gives a $\pi_1(S)-$invariant measure on $\Gosigma$. Extend it by zero, we get a measure on the space of geodesics in $\tilde{S}$, whose support is $\Gssigma$. Finally, since the map $\partial:\Gsigma\to\Gs$ is closed by Lemma \ref{closed}, we can push forward the measure along the map and get a geodesic current $L_\sigma$ on $\mathcal{G}(\tilde{S})$, which is defined to be the Liouville current for $(S,\sigma)$. We can also see from the construction that the support of $L_\sigma$ is $\Gss$.

\iffalse
Liouville current is closely related to the length spectrum by the following theorem, see Bonahon\cite{bonahon1988geometry} and Otal\cite{otal1990spectre}.

\begin{thm}

There exists a symmetric bilinear form $i$ on the space of geodesic currents such that
\begin{enumerate}
    \item[(1)] if $\alpha$ and $\beta$ are two currents given by closed curves, then $i(\alpha,\beta)$ is the geometric intersection number of the curves $\alpha$ and $\beta$.
    \item[(2)] if $\alpha$ is given by a closed curve and $\beta = L_\sigma$ is a Liouville current, then $i(\alpha,\beta)$ is the length of the geodesic representative of $\alpha$ in the metric $\sigma$.
\end{enumerate}
Moreover, two currents $\alpha$ and $\beta$ are the same if and only if $i(\alpha, \gamma) = i(\beta,\gamma)$ for all closed curve $\gamma$.
\end{thm}

Combining the results above, we could conclude that the Liouvllle current is uniquely determined by the length spectrum. The information of $\supp=\Gss$ will be crucial for cone angle and holonomy.
\fi

%\subsection{Chains and cone points}\label{computation_section}

In \cite{bankovic2018marked}, the authors constructed chains from the Liouville current of a flat cone metric and studied the relations between chains and cone points. In the following, we will briefly recall the definition of chains and some related results.

Take two geodesics $\gamma_1,\gamma_2\in \Gsigma$, and denote the endpoints of $\gamma_i$ by $x_i, y_i$. We say that $(x_1, y_1)$ and $(x_2,y_2)$ \textit{link} if $x_1,y_1$ belong to different connected components of $S^1_\infty\setminus\{x_2,y_2\}$. It's clear that the endpoints of $\gamma_i$ link if and only if $\gamma_1,\gamma_2$ intersect transversely, or they share a saddle connection and $\gamma_1$ crosses from one side of $\gamma_2$ to the other side. For simplicity, we will call both cases "transversely intersect".

Suppose $x_2,x_1,y_1,y_2$ are in the counterclockwise order in $S^1_\infty$. Let $[p,q]\subset S^1_\infty$ be the counterclockwise interval between two points $p,q\in S^1_\infty$. For a geodesic $\gamma$ with endpoints $x,y$, we say that $\gamma$ lies between $\gamma_1$ and $\gamma_2$ if $x\in[x_2,x_1],y\in[y_1,y_2]$. Denote by $[\gamma_1, \gamma_2]$ the set of geodesics lying between $\gamma_1$ and $\gamma_2$, and by $[(x_1,y_1),(x_2,y_2)]$ the image of $[\gamma_1,\gamma_2]$ under map $\partial$. See Figure \ref{lie between}.
\begin{figure}[h]
    \centering
    \begin{tikzpicture}
    \draw (0,0) circle (2);
    \node[above] at (1,1.732) {$x_2$};
    \node[above] at (-1,1.732) {$x_1$};
    \node[below] at (-1,-1.732) {$y_1$};
    \node[below] at (1,-1.732) {$y_2$};
    
    \draw[very thick] (1,1.732) to [out=-110, in=110] (1,-1.732) (-1,1.732) to [out=-70, in=70] (-1,-1.732);
    \draw[thin] (0.52,1.93) to [out=-100,in=100] (0.52,-1.93) (-0.52,1.93) to [out=-80,in=80] (-0.52,-1.93) (0,2)--(0,-2);
    
    \node[above] at (0,2) {$[x_2,x_1]$};
    \node[below] at(0,-2) {$[y_1,y_2]$};
    
    \draw[->] (3,-0.5) to [out=70,in=-70] (3,0.5);
    
    \node[left] at(-0.8,0) {$\gamma_1$};
    \node[right] at (0.8,0) {$\gamma_2$};

    \end{tikzpicture}
    \caption{Geodesics in $[\gamma_1,\gamma_2]$.}
    \label{lie between}
\end{figure}
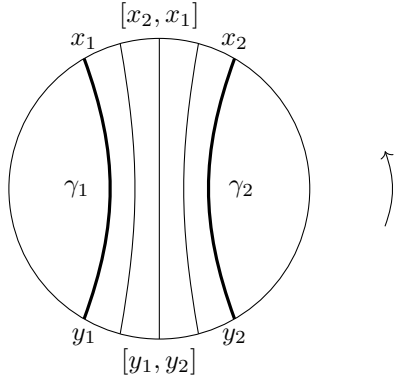

For the Liouville current $L_\sigma$ on the flat cone surface $(S,\sigma)$, a \textit{chain} is defined to be a sequence of points $\underline{x} = (\ldots, x_1,x_2,\ldots)$, $x_i\in S^1_\infty$, such that
\begin{enumerate}
    \item[(1)] $(x_i, x_{i+1})\in \supp$.
    \item[(2)] $x_i,x_{i+1},x_{i+2}$ is a counterclockwise ordered triple of distince points and 
    \[[(x_i,x_{i+1}),(x_{i+1},x_{i+2})]\cap \supp = \{(x_i,x_{i+1}), (x_{i+1},x_{i+2})\}\]
\end{enumerate}

For a chain $\underline{x}$, a sequence of geodesics $\underline{\gamma}=(\ldots,\gamma_0,\gamma_1,\ldots)$ is called a \textit{geodesic chain} of $\underline{x}$ if $\gamma_i\in\Gssigma$ and the endpoints of $\gamma_i$ are $(x_i,x_{i+1})$. The condition (2) above simply means that for a geodesic chain $\underline{\gamma}$, there is no other geodesic in $\Gssigma$ which lies between $\gamma_i$ and $\gamma_{i+1}$. The following result in \cite[Prop 4.1]{bankovic2018marked} gives the connection between chains, geodesic chains and cone points.

\begin{prop}[\cite{bankovic2018marked}]
\label{good chain prop}
Suppose $\underline{x}$ is a chain such that $\forall i$, $(x_i,x_{i+1})\in \Gss\setminus \Gts$. Then there exists a unique cone point $p$ and a geodesic chain $\underline{\gamma}=(\ldots,\gamma_0,\gamma_1,\ldots)$ of $\underline{x}$ such that $p$ is the unique cone point in each $\gamma_i$. 
\end{prop}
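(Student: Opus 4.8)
The plan is to realise the chain by an arbitrary family of geodesics, use the classification of asymptotic rays to force consecutive ones through one common cone point, and then deduce uniqueness from the rigidity of geodesics with a fixed pair of endpoints. For each $i$, since $\{x_i,x_{i+1}\}\in\Gss$ and $\Gss$ is by definition $\bdry(\Gssigma)$, we may choose some $\gamma_i\in\Gssigma$ with $\bdry\gamma_i=\{x_i,x_{i+1}\}$; as $\{x_i,x_{i+1}\}\notin\Gts$, each $\gamma_i$ carries at most one cone point. This already produces \emph{a} geodesic chain $\underline{\gamma}$ of $\underline{x}$, and the task is to show that it has the asserted form and is the only geodesic chain of $\underline{x}$.

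Fix $i$. Since $\gamma_i$ and $\gamma_{i+1}$ share the endpoint $x_{i+1}$, their rays toward $x_{i+1}$ are asymptotic and fall under the trichotomy of Lemma~\ref{asymptotic rays}. Case~(3), in which one ray approaches the other through infinitely many cone points, cannot occur, since those cone points would all lie on $\gamma_i$ or on $\gamma_{i+1}$, each of which has at most one. Case~(2), a flat half-strip of positive width at $x_{i+1}$, is excluded by the chain condition: take a straight ray in the interior of the half-strip, strictly between $\gamma_i$ and $\gamma_{i+1}$ and limiting to $x_{i+1}$, and extend it to a complete geodesic $\delta$; for all but countably many choices $\delta$ is nonsingular, hence $\delta\in\Gosigma\subset\Gssigma$. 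As $\delta$ shares the endpoint $x_{i+1}$ with each of $\gamma_i$ and $\gamma_{i+1}$, it cannot cross them transversally (convexity of the distance function in a CAT(0) space), so the other endpoint of $\delta$ lies in the arc from $x_i$ to $x_{i+2}$, and $\bdry\delta\in[\{x_i,x_{i+1}\},\{x_{i+1},x_{i+2}\}]$. As the interior ray varies, this endpoint moves continuously between neighbourhoods of $x_i$ and of $x_{i+2}$, and since $x_i\ne x_{i+2}$ it cannot always equal $x_i$ or $x_{i+2}$; so for a suitable choice $\bdry\delta$ is a point of $\supp$ in $[\{x_i,x_{i+1}\},\{x_{i+1},x_{i+2}\}]$ distinct from the two ends, contradicting condition~(2) of a chain. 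Therefore case~(1) holds: $\gamma_i$ and $\gamma_{i+1}$ split at a common cone point $q_i$, which is then \emph{the} cone point of both. Running this over all consecutive pairs, $q_i$ and $q_{i+1}$ are both the cone point of $\gamma_{i+1}$, so $q_i=q_{i+1}$; by induction all $q_i$ equal a single cone point $p$, and $p$ is the unique cone point on every $\gamma_i$. This gives the desired geodesic chain.

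For uniqueness it is enough to see that $\underline{\gamma}$ is the only geodesic chain of $\underline{x}$, for then $p$ --- the common cone point of its members --- is determined. Let $\underline{\gamma}'$ be another geodesic chain; by the previous paragraph it too runs through a single cone point $p'$. For each $i$, $\gamma_i$ and $\gamma_i'$ have the same endpoints $\{x_i,x_{i+1}\}$, so by the corollary to Lemma~\ref{asymptotic rays} they coincide or bound a maximal flat strip $R_i$. If $\gamma_i=\gamma_i'$ for some $i$, then $p,p'\in\gamma_i$ forces $p=p'$, and since from $p$ the geodesic ray to any given endpoint is unique, $\underline{\gamma}=\underline{\gamma}'$. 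Otherwise $\gamma_i\ne\gamma_i'$ for all $i$. Near $x_{i+1}$, $R_i$ is the flat half-strip between the ray shared by $\gamma_i,\gamma_{i+1}$ toward $x_{i+1}$ and the ray shared by $\gamma_i',\gamma_{i+1}'$ toward $x_{i+1}$; the same description applies to $R_{i+1}$ near $x_{i+1}$, so $R_i$ and $R_{i+1}$ have the same width, whence all $R_i$ share one width $w>0$. Then $p$ lies on one boundary of $R_i$ and $\gamma_i'$ is the other boundary, so $d(p,\gamma_i')=w$ for every $i$; but the $\gamma_i'$ run through $p'$ with directions that rotate by $\pi$ at each step and so wind all the way around $p'$, which is incompatible with $d(p,\gamma_i')$ being constant. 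This contradiction gives uniqueness.

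\textbf{Main obstacle.} The delicate points should be the flat-strip arguments: in case~(2), checking that the extended interior ray's far endpoint really lands in $[\{x_i,x_{i+1}\},\{x_{i+1},x_{i+2}\}]$ (with non-transversality of geodesics sharing an endpoint as the key input), and in the uniqueness step, ruling out constant-width flat strips between $\gamma_i$ and $\gamma_i'$ in the presence of the winding of the $\gamma_i'$ about $p'$. Everything else is bookkeeping with Lemma~\ref{asymptotic rays} and the at-most-one-cone-point property of $\Gss\setminus\Gts$.
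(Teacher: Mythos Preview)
The paper does not supply its own proof of this proposition: it is quoted verbatim from \cite[Prop.~4.1]{bankovic2018marked} and attributed there, so there is nothing in the paper to compare your argument against. Your proposal is therefore an independent proof attempt, and its overall architecture---use Lemma~\ref{asymptotic rays} on consecutive pairs, eliminate cases (2) and (3), deduce a common cone point, then argue uniqueness via the flat--strip corollary---is sound and is in the spirit of how such statements are handled in the Bankovic--Leininger framework.

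Two places deserve more care. First, in ruling out case~(2): you assert that the far endpoint of the extended interior ray lands in $[\{x_i,x_{i+1}\},\{x_{i+1},x_{i+2}\}]$ because $\delta$ ``cannot cross'' $\gamma_i$ or $\gamma_{i+1}$. The non-crossing does hold, but the reason is not just that they share an endpoint; you need that $t\mapsto d(\delta(t),\gamma_i)$ is convex and eventually bounded (equal to $s$ in the half-strip), which forces it to be nonincreasing, hence never zero. Also, to get an endpoint strictly between $x_i$ and $x_{i+2}$ you want the continuity $s\mapsto d_s$ together with $d_s\to x_i$ as $s\to 0$ and $d_s\to x_{i+2}$ as $s\to w$; these limits hold provided the chosen base of the half-strip avoids the cone points of $\gamma_i,\gamma_{i+1}$, which you should say.

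Second, your uniqueness endgame (``winding all the way around $p'$ is incompatible with $d(p,\gamma_i')$ constant'') is heuristic. A clean way to finish: the segment $[p,p']$ lies in each flat strip $R_i$, hence its initial direction $u$ at $p$ lies in the angle-$\pi$ sector of $\gamma_i$ at $p$; but the angle-$\pi$ sectors for $\gamma_i$ and $\gamma_{i+1}$ share only the ray $v_{i+1}$ and are otherwise disjoint (since $v_i\ne v_{i+2}$), while $u=v_{i+1}$ would force $p'\in\gamma_i$. This gives the contradiction from two consecutive indices and replaces the vague winding picture.
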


Notice that in Proposition \ref{good chain prop} we require every pair of adjacent points is not in $\Gts$. Such chains are called \textit{good chains} and the corresponding geodesic chains are called \textit{good geodesic chains}. From the proposition above, we know that a good geodesic chain is just a sequence of geodesic rays emanating from a fixed cone point such that the angle between two adjacent rays is $\pi$. Conversely, given a fixed cone point $p$, we can construct good chains and good geodesic chains by finding a sequence of vectors $\underline{v}=(\ldots,v_0,v_1,\ldots)$ in $T_p^1\tilde{S}$ such that
\begin{enumerate}
    \item[(1)] the angle between $v_i,v_{i+1}$ is $\pi$.
    \item[(2)] the geodesic ray $\delta_i$ with initial vector $v_i$ contains no cone point except $p$.
    \item[(3)] $(x_i,x_{i+1})\notin\Gts $ where $x_i$ is the endpoint of $\delta_i$.
\end{enumerate}
There exist only countably many sequences of vectors not satisfying condition (2) or (3), thus we could construct uncountably many good chains.

Moreover, we can use a good chain to compute the cone angle. Given a good chain $\underline{x}$ whose cone point is $p$ with cone angle $\theta(p)$, construct a function $R:\N\to \N$ in the following way: consider a point starting at $x_0$ and moving counterclockwise in $S^1_{\infty}$, let $x_{R(n)}$ be the first point it meets after passing through $x_0 $ $n$ times. There are two ways to compute the rotation angle of $x_0\to x_1 \to \ldots \to x_{R(n)}$: every time it goes back to $x_0$, it rotates for an angle $\theta(p)$, thus the total angle is $[n\theta(p),n\theta(p)+\pi]$; the other way is to count the number of geodesic rays or the number of points in the chain, which accounts for an angle $R(n)\pi$. Thus, $n\theta(p)\le R(n)\pi\le n\theta(p)+\pi$
and $$\theta(p) = \lim_{n\to\infty} \frac{R(n)}{n}\pi$$
For example, for the chain in Figure \ref{good chain}, $R(n)$ is $\frac{5n}{2}$ for even $n$ and is $\frac{5n+1}{2}$ for odd $n$, so the cone angle is $\frac{5\pi}{2}$. In particular, a periodic good chain gives a cone point whose cone angle is a rational multiple of $\pi$, while an aperiodic one corresponds to the case of irrational multiples of $\pi$. 

\begin{figure}[h]
    \centering
    \begin{tikzpicture}
    \draw (0,0) circle (2);
    \draw[->] (3.5,-0.5) to [out=70,in=-70] (3.5,0.5);
    
    \draw[thick] (0,0)--(0,2) (0,0)--(-1.17,-1.618);
    \draw[dashed] (0,0)--(-1.9,0.618) (0,0)--(1.9,0.618) (0,0)--(1.17,-1.618);
    
    \node[above] at(0,2) {$x_0$};
    \node[left] at (-1.9,0.618) {$x_3$};
    \node[below left] at (-1.17,-1.618) {$x_1(x_6)$};
    \node[below right] at (1.17,-1.618) {$x_4(x_9)$};
    \node[right] at (1.9,0.618) {$x_2(x_{12})$};
    
    \filldraw (0,0) circle (2pt);
    \end{tikzpicture}
    \caption{A chain corresponding to a cone point of cone angle $\frac{5\pi}{2}$.}
    \label{good chain}
\end{figure}
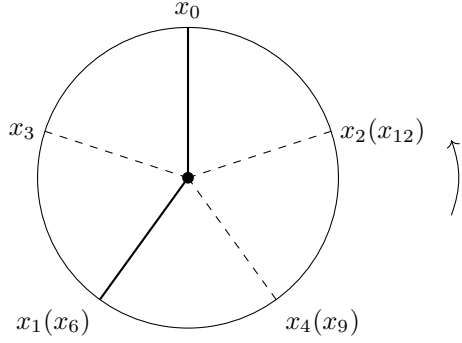

The following lemma in \cite[Lemma 4.4]{bankovic2018marked} determines whether two good chains give the same cone point. For a periodic good chain $\underline{x}$, it contains $k=k(\underline{x})$ distinct points in $S^1_\infty$, which is exactly the minimal period of this chain. Moreover, $x_i\to x_{i+1}$ generates a group of rotations, which is isomorphic to a finite abelian group. Then it must have a generator of minimal rotation angle and all other rotations are power of this generator. Therefore, there is a smallest integer $n=n(\underline{x})$ such that $x_0, x_n, x_{2n},\ldots, x_{(k-1)n}$ is the set of points in a counterclockwise order. For example, the chain $\underline{x}$ in Figure \ref{good chain} has $k(\underline{x})=5$ and $n(\underline{x})=3$.

\begin{lemma}[\cite{bankovic2018marked}]\label{interlaced}
Given two good chains $\underline{x},\underline{y}$, they give the same cone point if and only if they are in one of the following cases:
\begin{enumerate}
    \item[(1)] $\underline{x}$ and $\underline{y}$ are both periodic and $k(\underline{x})=k(\underline{y})$, $n(\underline{x})= n(\underline{y})$
    \item[(2)] $\underline{x}$ and $\underline{y}$ are both aperiodic and for any $y_i,y_{i+1}$, there exists a sequence $x_{j_n},x_{j_{n}+1}\to y_i,y_{i+1}$ as $n\to \infty$.
\end{enumerate}
\end{lemma}

Two chains (possibly not good chains) are called \textit{perfectly interlaced} if they satisfy one of the above conditions.

\subsection{Modification of good chains}

In \cite{bankovic2018marked}, the authors showed that if the geodesic currents of two flat cone metrics are the same, then the set of good chains are automatically the same and we could build an isometry between two metrics. The crucial point is whether cone angles are the same for two metrics, so there is no need to compute the cone angle.

But in our case, we need to know whether the cone angle is an integer multiple of $\pi$. In the previous section, we already saw how to compute cone angles from good chains. But it's not easy to distinguish good chains among all chains because the Liouville current does not directly give the information about $\Gts$. We turn to consider more general chains. We say a chain $\underline{x}$ is \textit{approximated} by a sequence of chains $\underline{x}^{(k)}, k\in\N$ if for any $i$, the sequence $x^{(k)}_i,x^{(k)}_{i+1}$ converges to $x_i,x_{i+1}$ as $k\to\infty$. A chain is called \textit{approachable} if it can be approximated by uncountably many distinct sequences of chains such that any two chains (possibly in different sequences) are perfectly interlaced. 

In \textsection\ref{computation_section}, we talk about how to construct good chains from a given cone point, and this method could also be used to contruct a sequnce of approximating chains. Moreover, we can construct uncountably many such sequences, so good chains are approachable.

\begin{lemma}\label{approachable}
For an approachable chain $\underline{x}$, if we apply the computation method in \textsection\ref{computation_section}, then $\lim_{n\to\infty}\frac{R(n)}{n}\pi$ still gives the cone angle of a cone point.
\end{lemma}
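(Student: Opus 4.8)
The plan is to attach to $\underline x$ a single cone point $p$ and then to show $\lim_{n}\tfrac{R(n)}{n}\pi=\theta(p)$, by combining Proposition~\ref{good chain prop} and Proposition~\ref{interlaced} with a continuity argument for the purely combinatorial quantity $R(n)$. The first thing I would record is a rigidity fact: \emph{a chain is uniquely determined by any one of its edges $\{x_i,x_{i+1}\}$ together with the index $i$}. This is immediate from condition~(2) in the definition of a chain: given two candidates for $x_{i+2}$, the geodesic joining $x_{i+1}$ to whichever candidate is farther from $x_{i+1}$ in the cyclic order lies in $[\{x_i,x_{i+1}\},\{x_{i+1},\text{nearer candidate}\}]$ and lies in $\supp$, contradicting condition~(2) for the chain realising the nearer candidate; running the same argument backwards pins down $x_{i-1}$. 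Since $\Gts$ is countable, it follows that only countably many chains possess an edge in $\Gts$, and every other chain is a good chain. Hence, among the uncountably many chains occurring in the approximating sequences of $\underline x$ — which by hypothesis are pairwise perfectly interlaced — all but countably many are good chains, and by Proposition~\ref{interlaced} all of these good chains give one and the same cone point $p$. Modulo the point flagged below, I therefore obtain a sequence $\underline x^{(m)}\to\underline x$ of good chains all attached to $p$.

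Granting this, the conclusion is quick. By Proposition~\ref{good chain prop} each $\underline x^{(m)}$ is a bi-infinite sequence of endpoints of geodesic rays from $p$ with consecutive rays meeting at angle $\pi$ on a consistent side, so the associated functions satisfy $\tfrac{R^{(m)}(n)}{n}\pi-\tfrac{\pi}{n}\le\theta(p)\le\tfrac{R^{(m)}(n)}{n}\pi$ for every $n$, exactly as in the computation in Section~2.2. On the other hand $R(n)$, for an arbitrary chain, is a locally constant functional of the initial segment $x_0,\dots,x_{R(n)}$: it depends only on the cyclic order of these points on $S^1_\infty$ and on how the finite word $x_0\to x_1\to\cdots$ wraps around. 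Since $x_i^{(m)}\to x_i$ for every $i$, this datum stabilises, so $R^{(m)}(n)=R(n)$ once $m$ is large (depending on $n$). Plugging this into the displayed inequality gives $\bigl|\tfrac{R(n)}{n}\pi-\theta(p)\bigr|\le\tfrac{\pi}{n}$ for all $n$; hence $\lim_{n\to\infty}\tfrac{R(n)}{n}\pi$ exists and equals $\theta(p)$, the cone angle of the cone point $p$.

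The main obstacle is the passage from ``$\underline x$ is approachable'' to ``$\underline x$ is a limit of good chains attached to one cone point'': the approximating sequences are only required to consist of chains, and one must handle the case in which a sequence converging to $\underline x$ is eventually made of bad chains. The key there is to extend the dictionary of Proposition~\ref{interlaced} to bad chains, i.e. to show that a chain perfectly interlaced with a good chain attached to $p$ still carries the rotation data of $p$ — the same period $k$ and step $n$ in the periodic case, and, in the aperiodic case, the property that each of its edges is a limit of $\pi$-angle geodesics through $p$ and hence (by upper semicontinuity of the cone angle at $p$, which keeps the limiting geodesic straight on one side at $p$) is itself such a geodesic. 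Such a bad chain then sits arbitrarily close to good chains attached to $p$, obtained by rotating the initial direction at $p$ slightly so as to avoid the countably many forbidden directions, and these may be substituted for the bad terms. The aperiodic instance of this interlacing-to-cone-point statement is the part I expect to require the most care; everything else is bookkeeping on top of Propositions~\ref{good chain prop} and~\ref{interlaced}.
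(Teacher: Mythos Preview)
Your argument is essentially the paper's. Both proofs observe that only countably many chains fail to be good (your rigidity lemma --- that a chain is determined by any single edge together with its index --- makes this explicit; the paper simply asserts it), so among the uncountably many approximating chains one can extract a sequence of good chains, all attached to a single cone point $p$ by Proposition~\ref{interlaced}. The paper finishes slightly differently: rather than arguing that $R(n)$ is locally constant in the chain, it passes the \emph{tangent-vector} sequences $\underline v^{(k)}\in T^1_p\Tilde S$ to the limit, obtaining a sequence $\underline v$ (still with consecutive angles $\pi$) that generates $\underline x$ by continuity of $\partial$, so the computation of \S2.2 applies verbatim to $\underline x$. Your continuity-of-$R(n)$ endgame is equivalent to this, since the cyclic order of the $x_i$ on $S^1_\infty$ is precisely the cyclic order of the $v_i$ on $T^1_p\Tilde S$.

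Your third paragraph is unnecessary worry. The obstacle you flag there --- that a given approximating sequence might consist entirely of bad chains --- is already dispatched by your first paragraph: once you know that uncountably many good chains at $p$ occur among the approximators, you can assemble from them a sequence converging to $\underline x$, with no need to analyse bad chains perfectly interlaced with good ones or to extend Proposition~\ref{interlaced}.
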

\begin{proof}
Recall that the computation method is based on the property that every two adjacent geodesic rays form an angle $\pi$ at the cone point. Thus, it suffices to show that an approachable chain is also generated by a sequence of vectors $\underline{v}=(\ldots,v_0,v_1,\ldots)$ such that the angle between $v_i$ and $v_{i+1}$ is $\pi$.

Since $\underline{x}$ is approximated by uncountably many sequences and there are only countably many chains that are not good chains, there must exist an approximating sequence which only consists of good chains. By Proposition \ref{interlaced}, all these chains correspond to the same unique cone point $p$. Thus, every chain is generated by a sequence of vectors $\underline{v}^{(k)}$ in $T_p\tilde{S}$. As $k\to\infty$, $\underline{v}^{(k)}$ converges to a sequence of vectors and every adjacent pair of vectors forms an angle $\pi$. By continuity of $\partial:\Gsigma\to\Gs$, the sequence of vectors generates exactly the chain $\underline{x}$.
\end{proof}

Lemma \ref{approachable} tells us that all the information about cone angle is contained in the Liouville current, or more precisely, in $\supp=\Gss$.

\begin{definition}\label{def_integral}
A Liouville current is called \textit{integral} if all cone angles are integer multiples of $\pi$.
\end{definition}

\section{Lamination and flat cone metric}

To obtain information about holonomy, it is helpful if there exists a dense geodesic in the flat cone surface. In this section, we will study the relations between lamination and foliation and find out when a dense geodesic exists. To avoid ambiguity, we will use ``hyperbolic geodesics'' and ``flat geodesics'' to distinguish different cases if necessary.

\subsection{Lamination and principal region}

Given a closed surface $S$ with a hyperbolic metric, a \textit{geodesic lamination} $\lambda$ is a closed subset of $S$ which is a disjoint union of simple geodesics. For example, a set of finitely many disjoint simple closed geodesics is a lamination. A \textit{principal region} for a lamination $\lambda$ is a connected component of $S\setminus \lambda$. A \textit{crown} is a complete hyperbolic surface with finite area and geodesic boundary, which is homeomorphic to $\mathbb{S}^1\times [0,1]\setminus A$ for a finite set $A\subset \mathbb{S}\times \{1\}$.
\begin{figure}[ht]
    \centering
    \begin{tikzpicture}[scale=0.6]
    \draw (0,0) circle (1);
    \draw (1.414,1.414) arc [radius=2, start angle=45, end angle=315];
    \draw (1.414,1.414) to [out=-45, in=180] (6,0.1);
    \draw (1.414,-1.414) to [out=45, in=180] (6,-0.1);
    \path[pattern= north east lines] (1.414,1.414) to [out=-45, in=180] (6,0.1)--(6,-0.1) to [out=-180, in=45] (1.414,-1.414) arc [radius=2, start angle=315, end angle=45];
    \draw[fill=white] (0,0) circle (1);
    \end{tikzpicture}
    \caption{A crown set homeomorphic to an annlus with one puncture.}
    %\label{fig:my_label}
\end{figure}
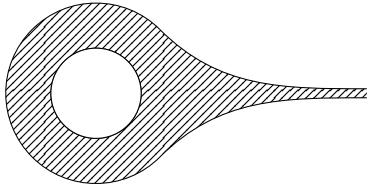

\begin{thm}[\cite{casson1988automorphisms}]\label{principal}
Let $S$ be a hyperbolic closed surface. Let $\lambda$ be a lamination without closed geodesics. Then a principal region $P$ is in one of the following forms:
\begin{enumerate}
    \item[(1)] a finite sided polygon with vertices at infinity (ideal polygon)
    \item[(2)] there exists a compact subset $P_0$ such that $P\setminus P_0$ is a union of finitely many crowns.
\end{enumerate}
\end{thm}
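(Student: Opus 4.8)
The plan is to study a principal region $P$ through its metric completion $\hat P$ for the path metric it inherits from $S$. First I would check that $\hat P$ is a complete hyperbolic surface with totally geodesic boundary: on $P$ the metric is honestly hyperbolic, completing only adds back the leaves of $\lambda$ adjacent to $P$, and since those leaves are complete geodesics the resulting boundary is geodesic; using that $\lambda$ is a lamination — its leaves are disjoint, embedded, and form a closed subset of $S$ — one also checks that $\hat P$ is genuinely a manifold with boundary rather than a more singular object, each boundary line being a leaf of $\lambda$ wrapped once or twice. The essential observation is that the tautological map $\hat P\to S$ is a local isometry, so $\operatorname{area}(\hat P)\le\operatorname{area}(S)<\infty$; in particular $\hat P$ has finite area.

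Next I would invoke the structure theory of finite-area complete hyperbolic surfaces with geodesic boundary. Doubling $\hat P$ across its boundary produces a complete hyperbolic surface of area $2\operatorname{area}(\hat P)<\infty$, whose $\varepsilon$-thick part (for $\varepsilon$ below the Margulis constant) is compact; restricting back, $\hat P$ has finite topological type: it deformation retracts onto a compact core, and the complement of the core is a finite disjoint union of ends, each a cusp neighbourhood or a collar of a boundary geodesic, with no infinite-area funnel ends because $\operatorname{area}(\hat P)<\infty$. Two further reductions pin down the ends. Since $S$ is closed it has no cusps, so $\hat P$ has no \emph{interior} cusp: a small horocyclic loop surrounding such a cusp would lie in $P\subset S$, but it cannot be essential in $S$ (which has injectivity radius bounded below) nor nullhomotopic in $S$ (a nullhomotopic one would bound a disk meeting $\lambda$, since otherwise it would be inessential in $\hat P$, yet a leaf meeting that disk would have to cross the loop, which is disjoint from $\lambda$). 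And the hypothesis that $\lambda$ has no closed leaves forbids closed boundary geodesics. Hence every boundary geodesic of $\hat P$ is a bi-infinite leaf running from cusp to cusp, and every cusp sits on the boundary.

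With this in hand I would read off the dichotomy from the compact surface-with-boundary $\Sigma$ obtained by filling in the finitely many cusps of $\hat P$ as points on $\partial\Sigma$. Every boundary circle of $\Sigma$ carries at least one such point and is cut by these points into finitely many arcs, which are precisely the boundary leaves. If $\Sigma$ is a disk, then $\hat P$, hence $P$, is a finite-sided ideal polygon: this is case (1). Otherwise $\Sigma$ has a nontrivial compact core; taking $P_0\subset P$ to be the complement of small neighbourhoods of the ends of $\hat P$ associated to the boundary circles of $\Sigma$, the region $P\setminus P_0$ is a finite disjoint union of pieces, one per boundary circle, each a collar of that circle with its cusp-points removed — that is, an annulus with finitely many punctures on one boundary circle, a crown. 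This is case (2).

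I expect the crux to be the finiteness assertions of the second step: a priori a principal region could have infinitely many sides or infinitely many complementary ends, and the only way to exclude this is to feed in the global hypothesis that $S$ is closed — equivalently, that it has finite total area — through compactness of the thick part of (the double of) $\hat P$. The other delicate point is the first step, verifying that the path-metric completion really yields a hyperbolic surface with \emph{geodesic} boundary; this is where disjointness and completeness of the leaves of $\lambda$, together with closedness of $\lambda$ in $S$, are genuinely used.
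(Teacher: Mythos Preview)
Your plan is correct and follows a genuinely different route from the paper. The paper argues directly in the universal cover: it lifts $P$ to a convex region $\tilde P\subset\mathbb{H}^2$, notes that each boundary geodesic $\tilde\gamma_0$ must share an ideal vertex with neighbours $\tilde\gamma_{\pm1}$ (otherwise $\tilde P$ would contain a full fundamental domain), and so obtains a chain $(\tilde\gamma_n)_{n\in\Z}$. If this chain closes up, $\tilde P$ is a finite ideal polygon; the stabiliser $\Gamma_P\subset\pi_1(S)$ is then finite, hence trivial since $\pi_1(S)$ is torsion-free, giving case~(1). If the chain is infinite, Gauss--Bonnet on $P$ bounds the number of boundary leaves of $P$, so some deck transformation $g$ sends $\tilde\gamma_0$ to $\tilde\gamma_n$; the convex hull of the chain together with the axis of $g$ projects to a crown, distinct chains give disjoint crowns, and what remains is bounded by closed geodesics and hence compact. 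By contrast, you pass to the path-metric completion $\hat P$, double it, and invoke the thick--thin decomposition to obtain finite topological type in one stroke; you then exclude interior cusps and closed boundary components separately. Your approach is cleaner conceptually and would extend verbatim to finite-volume $S$, at the cost of importing the Margulis lemma; the paper's argument is more elementary and self-contained, and makes the crowns visible as explicit quotients by cyclic subgroups of $\pi_1(S)$. Both hinge on the same finiteness input---finite area of $P$---but cash it out differently: you via compactness of the thick part, the paper via Gauss--Bonnet bounding the number of boundary leaves.
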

\begin{figure}[h]
    \centering
    \begin{tikzpicture}[scale=0.8]
    \draw (-3.268,1) arc [radius=2, start angle=30, end angle=330];
    \draw (-3.268,1) to [out=-60, in=180] (-1,0.1);
    \draw (-3.268,-1) to [out=60, in=180] (-1,-0.1);
    
    \draw (-4.293,0.707) arc [radius=1, start angle=45, end angle=315];
    \draw (-4.293,0.707) to [out=-45, in= 0] (-5.96,0.1);
    \draw (-4.293,-0.707) to [out=45, in= 0] (-5.96,-0.1);
    \draw (-6.96,0.1)-- (-9,0.1);
    \draw (-6.96,-0.1)-- (-9,-0.1);
    
    \draw[very thick] (-5,0) circle (1.5);
    %\node[right] at(-3.5,0) {axis};

    \draw (3,0) circle (2);
    \draw (5,0) to [out=180,in=-135] (4.414,1.414);
    \draw (4.414,1.414) to [out=-135,in=-105] (3.5,1.93);
    \draw (3.5,1.93) to [out=-105,in=-95] (3.1743,1.992);
    \draw (3.1743,1.992) to [out=-95, in=-91] (3.035,1.9997);
    \draw (5,0) to [out=180,in=135] (4.414,-1.414);
    \draw (4.414,-1.414) to [out=135,in=105] (3.5,-1.93);
    \draw (3.5,-1.93) to [out=105,in=95] (3.1743,-1.992);
    \draw (3.1743,-1.992) to [out=95, in=91] (3.035,-1.9997);
    
    \draw (1,0) to [out=0,in=-45] (1.586,1.414);
    \draw (1.586,1.414) to [out=-45,in=-75] (2.5,1.93);
    \draw (2.5,1.93) to [out=-75,in=-85] (2.8257,1.992);
    \draw (2.8257,1.992) to [out=-85, in=-89] (2.965,1.9997);
    \draw (1,0) to [out=0,in=45] (1.586,-1.414);
    \draw (1.586,-1.414) to [out=45,in=75] (2.5,-1.93);
    \draw (2.5,-1.93) to [out=75,in=85] (2.8257,-1.992);
    \draw (2.8257,-1.992) to [out=85, in=89] (2.965,-1.9997);
    
    %\path[pattern=north east lines, pattern color=gray] (1,0)--(5,0) to [out=180,in=135] (4.414,-1.414) to [out=135,in=45] (1.586,-1.414) to [out=45, in= 0] (1,0);
    
    \draw[very thick] (3,2)--(3,-2);
    %\node[above right] at(3,0.5) {axis};
    \node at(2.25,-0.5) {$\tilde{W}_1$};
    \node at (3.75,-0.5) {$\tilde{W}_2$};
    \end{tikzpicture}
    \captionsetup{width=0.8\textwidth}
    \caption{A principal region $P$ of the second type and its lift. Here $P_0$ is a closed geodesics and $P\setminus P_0$ is the union of two crown sets, each of which is homeomorphic to a once punctured annulus. $\tilde{W}_i$ are the corresponding lifts of crown sets. }
    \label{principal region}
\end{figure}
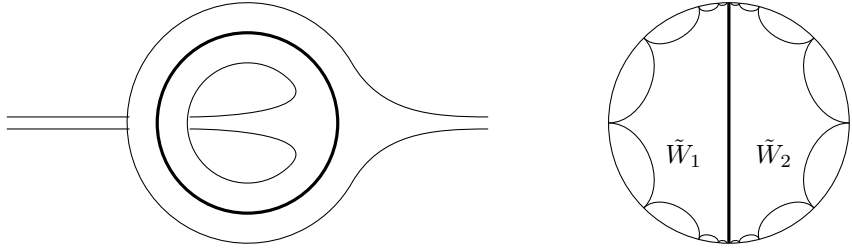
\begin{proof}
Lift $P$ to the universal covering of the surface and let $\tilde{P}$ be one of the connected components. $\tilde{P}$ is cut out by disjoint geodesics, so it's a convex subset and homeomorphic to a disk. It follows that the fundamental group of $\tilde{P}$ is trivial and thus $\tilde{P}$ is the universal covering of $P$. 

A boundary edge $\tilde{\gamma}_0$ of $\tilde{P}$ has to be adjacent to another two boundary edges $\tilde{\gamma}_{\pm1}$ which have a common endpoint with $\tilde{\gamma}_0$, otherwise $\tilde{P}$ would contain a fundamental domain of the surface $S$. Thus we obtain a sequence of geodesics $\ldots, \tilde{\gamma}_{-1}, \tilde{\gamma}_{0}, \tilde{\gamma}_{1}, \ldots$ There could be many such sequences and any two different sequences are disjoint from each other.

Suppose $\tilde{\gamma}_n=\tilde{\gamma}_0$ for one sequence of geodesics and for some integer $n$. Then this sequence cycles around and contains finitely many geodesics. Thus $\tilde{P}$ is a finite-sided ideal polygon. Set $\Gamma_P = \{f\in \pi_1(S)\,|\, f(\tilde{P}) = \tilde{P}\}$, then $P \cong \tilde{P} / \Gamma_P$. Take a point $p$ in the boundary of $P$, then its lift also lies in the boundary of $\tilde{P}$. Since the lamination $\lambda$ contains no closed geodesics, each $\tilde{\gamma}_i$ contains at most one lift of $p$. Thus, $p$ has only finitely many lifts and $\Gamma_P$ is a finite group. Each nontrivial element in $\pi_1(S)$ is of infinite order, so $\Gamma_P$ must be trivial. Thus $P$ is isometric to $\tilde{P}$ and it is also an ideal polygon.

Suppose all $\tilde{\gamma}_n$ are distinct for every sequence of geodesics. Since $P$ is of finite area, it has only finitely many boundary edges by Gauss-Bonnet Formula. But $\{\tilde{\gamma}_n\}$ is a infinite sequence, thus there exists a deck transformation $g$ fixing $\tilde{P}$ and $g. \tilde{\gamma}_0 = \tilde{\gamma}_n$ for some $n\in\Z$. Notice that as $n\to\pm\infty$, the endpoints of $\tilde{\gamma}_n$ converge to one of the endpoints of geodesic $g$ in the universal covering. Let $\tilde{W}$ be the smallest convex set containing all $\tilde{\gamma}_n$ and the geodesic representative of $g$, then $\tilde{W}$ projects down to a crown set, while $g$ descends to the closed boundary of the crown set and $\tilde{\gamma}_n$ descends to the punctured boundary.

For two different boundary edges of $\tilde{P}$, their crown sets are either the same or disjoint. Let $\tilde{P}_C$ be the union of the lifts of all crown sets in $\tilde{P}$, and set $\tilde{P}_0 = \tilde{P}\setminus \tilde{P}_C$. Then $\tilde{P}_0$ is a convex set bounded by lifts of closed geodesics. Thus $\tilde{P}_0$ projects down to a compact subset of the surface $S$, bounded by some closed geodesics.
\end{proof}

\subsection{From flat geodesics to lamination}\label{foliation_structure}

Consider a flat cone surface $(S,\sigma)$. Take a flat geodesic $h_0$ which is simple and nonsingular. Let $A$ be the closure of $h_0$. Recall that a line field is a smooth section of the projective tangent bundle. By taking the limit of a line field along $h_0$, we obtain a line field defined on $A\setminus\{\text{cone points}\}$. The complete integral curves and their limit equip $A$ with a foliation structure. Notice that a leaf might have an overlap with itself, see Figure \ref{leaf_overlap}.

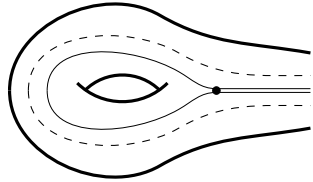
\begin{figure}[h]
    \centering
    \begin{tikzpicture}[scale=0.5]
    %\draw[help lines] (-4,-3) grid (4,3);
    \draw[very thick] (-2,0) to [out=45, in=135] (0,0);
    \draw[very thick] (-2.2,0.2) to [out=-45, in=-135] (0.2,0.2);
    
    \draw[very thick] (-4,0) to [out=90, in=150] (0,2) to [out=-30, in=170] (4,1);
    \draw[very thin] (-3,0) to [out=90, in=150] (0.5,0.5) to [out=-30, in=180] (1.5,0.05)--(4,0.05);
    \draw[dashed] (-3.5,0) to [out=90, in=150] (0.5,1) to [out=-30, in=180] (4,0.4);
    
    \draw[very thick, yscale=-1] (-4,0) to [out=90, in=150] (0,2) to [out=-30, in=170] (4,1);
    \draw[very thin, yscale=-1] (-3,0) to [out=90, in=150] (0.5,0.5) to [out=-30, in=180] (1.5,0.05)--(4,0.05);
    \draw[dashed, yscale=-1] (-3.5,0) to [out=90, in=150] (0.5,1) to [out=-30, in=180] (4,0.4);
    
    \filldraw (1.5,0) circle (0.1);
    
    \end{tikzpicture}
    \caption{The singular leaf is the limit of the dashed nonsingular leaf. Although the singular leaf has a self intersection, it is not regarded as a closed leaf.}
    \label{leaf_overlap}
\end{figure}

Let $\tilde{A}$ be the preimage of $A$ in the universal covering with the induced foliation structure. If $\tilde{h}_0$ is a lift of $h_0$, $\tilde{A}=\overline{\{g.\tilde{h}_0| g\in \pi_1(S)\}}=\overline{\pi_1(S).\tilde{h}_0}$. Since $h_0$ is a nonsingular geodesic, we know all leaves must be geodesics in $\Gssigma$. Equip $S$ with a hyperbolic metric $\sigma'$ and $\tilde{S}$ with the induced hyperbolic metric. Let $\gamma(\tilde{h})$ be the unique hyperbolic geodesic connecting the endpoints of $\tilde{h}$ and $\gamma(\tilde{A})$ be the union of all such geodesics. Note that $\gamma(\tilde{A})$ is $\pi_1(S)-$invariant, thus it descends to a union of geodesics in the hyperbolic surface $(S,\sigma')$, denoted by $\gamma(A)$. Denote by $\gamma(h)$ the projection of $\gamma(\tilde{h})$.

\begin{prop}
$\gamma(A)$ is a lamination.
\end{prop}
\begin{proof}
Take any two leaf $h,h'$ in $A$. Since they have no transverse intersection, the endpoints of their lifts in the universal covering will not link. Thus, $\gamma(h)$ and $\gamma(h')$ are disjoint geodesics. Similarly, if we take $h=h'$, we can see that $\gamma(h)$ is a simple geodesic.

It remains to show that $\gamma(A)$ is a closed subset. Note that the set of leaves in $\tilde{A}$ is a closed subset of $\Gsigma$. Apply Lemma \ref{closed}, the set of endpoints is a closed subset of $\Gs$. For hyperbolic metric $\sigma'$, the map $\partial:\mathcal{G}(\sigma')\rightarrow \Gs$ is a homeomorphism, so $\gamma(\tilde{A})$ is a closed subset. Since $\gamma(\tilde{A})$ is the preimage of $\gamma(A)$ under the covering map, $\gamma(A)$ is also closed.
\end{proof}

Recall that a boundary point of a subset $S\subset T$ is a point in $S$ such that every neighborhood containing this point has nonempty intersection with $T\setminus S$. Now consider the subset $\tilde{A}\subset\tilde{S}$. A leaf $\tilde{h}\in \tilde{A}$ is called a \textit{boundary leaf} if there exists a nonsingular boundary point in $\tilde{h}$. Note that we only require the existence of such points because for cone-asymptotic leaves, it may happen that some points are boundary points, but other points of the same leaf are not boundary points. Similarly, we can define the \textit{boundary geodesic} of a geodesic lamination.

\begin{lemma}\label{boundary leaf}
If $\tilde{h}$ is a boundary leaf of $\tilde{A}$ and it's not in a flat strip, then $\gamma(\tilde{h})$ is a boundary geodesic of the lamination $\gamma(\tilde{A})$.
\end{lemma}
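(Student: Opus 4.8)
The plan is to argue by contradiction, transporting the ``empty side'' of $\Tilde h$ over to $\gamma(\Tilde h)$ by means of Lemma \ref{closed} together with the hypothesis that $\Tilde h$ bounds no flat strip. Since $\Tilde h$ is a boundary leaf, fix a nonsingular point $x\in\Tilde h$ and a flat half-disk $U_x$ with diameter on $\Tilde h$, lying on one side of $\Tilde h$, such that $U_x\cap\Tilde A$ is exactly that diameter. Let $I$ be the arc of $S^1_\infty$ bounded by the two endpoints of $\Tilde h$ on the side where $U_x$ sits; then $\gamma(\Tilde h)$ has the same endpoints, and ``the $I$-side'' makes sense for $\gamma(\Tilde h)$ as well. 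Because $\Tilde h$ bounds no flat strip, the corollary to Lemma \ref{asymptotic rays} says $\Tilde h$ is the only leaf of $\Tilde A$ with that pair of endpoints, so $\gamma(\Tilde h)$ is the only leaf of the lamination $\gamma(\Tilde A)$ with those endpoints. I will prove that $\gamma(\Tilde h)$ is a boundary geodesic of $\gamma(\Tilde A)$ on the $I$-side.

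So suppose not, and fix an interior point $y^*$ of $\gamma(\Tilde h)$. Then every hyperbolic half-disk centred at $y^*$ on the $I$-side meets $\gamma(\Tilde A)$ off $\gamma(\Tilde h)$, so taking radii $1/n$ produces leaves $\gamma(\Tilde h_n)$ of $\gamma(\Tilde A)$, all distinct from $\gamma(\Tilde h)$, with points $p_n\to y^*$ lying strictly on the $I$-side. As $\gamma(\Tilde A)$ is a lamination, each $\gamma(\Tilde h_n)$ is disjoint from $\gamma(\Tilde h)$, hence lies on the closed $I$-side and has both endpoints in $\overline I$. Now a short observation in the hyperbolic disc applies: the only complete geodesic through $y^*$ with both endpoints in $\overline I$ is $\gamma(\Tilde h)$ itself, so $p_n\to y^*$ together with compactness of $\overline I$ forces the endpoints of $\gamma(\Tilde h_n)$ --- equivalently those of the flat leaf $\Tilde h_n$ --- to converge to the endpoints of $\Tilde h$. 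By Lemma \ref{closed} a subsequence of the flat geodesics $\Tilde h_n$ converges in $\mathcal{G}(\sigma)$ to a flat geodesic $\Tilde h_\infty$ with the endpoints of $\Tilde h$; since the set of leaves of $\Tilde A$ is closed, $\Tilde h_\infty$ is a leaf of $\Tilde A$, and since $\Tilde h$ bounds no flat strip, $\Tilde h_\infty=\Tilde h$. Thus $\Tilde h_n\to\Tilde h$ in the compact-open topology along this subsequence.

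It remains to contradict the emptiness of $U_x$. Each $\Tilde h_n$ is a leaf of $\Tilde A$ distinct from $\Tilde h$ with endpoints in $\overline I$, so near the nonsingular point $x$ it is disjoint from $\Tilde h$ and lies on the $I$-side; in particular it cannot pass through $x$, because the leaf of the foliation through a nonsingular point is unique by the line-field construction on $A$, which likewise forbids $\Tilde h_n$ from merging into $\Tilde h$ along a subray containing $x$. Since $\Tilde h_n\to\Tilde h$, for $n$ large $\Tilde h_n$ has a point in the open part of $U_x$, contradicting $U_x\cap\Tilde A=\text{diameter}$. Hence $\gamma(\Tilde h)$ is a boundary geodesic of $\gamma(\Tilde A)$. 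I expect the main obstacle to be precisely this last comparison: one must arrange parametrizations so that compact-open convergence genuinely drives $\Tilde h_n$ arbitrarily close to $x$ from the $I$-side, and run the small case analysis (according to where a possible shared subray of $\Tilde h_n$ with $\Tilde h$ lies relative to $U_x$) that forces $\Tilde h_n$ into the open part of $U_x$; a minor bookkeeping issue is to fix once and for all the $\pi_1(S)$-equivariant identification of $S^1_\infty$ for the flat and for the auxiliary hyperbolic metric, so that ``endpoints'' and ``$I$-side'' mean the same for $\Tilde h$ and $\gamma(\Tilde h)$.
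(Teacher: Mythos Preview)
Your argument is correct and follows essentially the same route as the paper's proof: assume $\gamma(\Tilde h)$ is approached from a given side, pass to endpoints, invoke Lemma~\ref{closed} to extract a flat limit leaf, and use the no--flat--strip hypothesis (via the corollary to Lemma~\ref{asymptotic rays}) to identify that limit with $\Tilde h$, contradicting the empty half--disk $U_x$. The only structural difference is that the paper phrases this as ``approached on one side in the hyperbolic picture $\Rightarrow$ approached on that side in the flat picture'' and then applies it to \emph{both} sides at once, whereas you fix the specific empty side $I$ from the outset and track it through the argument; your version is in fact a bit more explicit than the paper's about why the side is preserved (endpoints in $\overline I$ force $\Tilde h_n$ to the $I$--half of $\Tilde S$), and the case analysis you flag at the end is a point the paper leaves equally implicit.
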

\begin{proof}
Suppose $\gamma(\tilde{h})$ is not a boundary geodesic. Then there exists two sequences of leaves in $\tilde{A}$, denoted by $\tilde{h}_i^P$ and $\tilde{h}_i^N$, such that $\gamma(\tilde{h}_i^P)$ converges to $\gamma(\tilde{h})$ from the positive side and $\gamma(\tilde{h}_i^N)$ converges from the negative side. The corresponding sequences of endpoints also converge to the endpoints of $\tilde{h}$. By Lemma \ref{closed}, there exists a subsequence of $\tilde{h}_i^P$ converging to a leaf which has the same endpoints as $\tilde{h}$. Since $\tilde{h}$ does not lie in a flat strip, apply Corollary \ref{flat_strip} and we can see that the limit leaf is exactly $\tilde{h}$. Similarly, there is another sequence of leaves converging from the negative side. Thus $\tilde{h}$ cannot be a boundary leaf. Contradiction.
\end{proof}
\begin{rmk}
In general, the converse of this lemma is not true.
\end{rmk}

\subsection{Totally simple Liouville current}

\begin{definition}\label{def_totally_simple}
A Liouville current $L_\sigma$ of a flat cone surface $(S,\sigma)$ is called \textit{totally simple} if for any two linked pairs of endpoints $(x_1,y_1)$, $(x_2,y_2)$ in $\supp$, $\overline{\pi_1(S).(x_1,y_1) }\cap \overline{\pi_1(S).(x_2,y_2) }=\emptyset$.
\end{definition}

Given a flat cone surface $(S,\sigma)$, if the associated Liouville current is totally simple, then every geodesic in $\Gssigma$ must descend to a simple geodesic. Otherwise, there exist two transversely intersected lifts of this nonsimple geodesic and their $\pi_1(S)$ orbits have nonempty intersection.

Now take a geodesic in $\Gssigma$. Since it descends to a simple geodesic, the closure of its $\pi_1(S)$ orbit admits a foliation structure. Moreover, if two geodesics in $\Gssigma$ intersect transversely, then their corresponding foliated sets cannot have common leaves by the definition of total simplicity.

\begin{lemma}\label{strip}
For a totally simple Liouville current, every flat strip $\tilde{\Lambda}$ in $\tilde{S}$ must descend to an annulus in $S$. The same holds true for a flat half strip. In particular, the boundary geodesic of a (half) flat strip must descend to a closed geodesic.
\end{lemma}
\begin{proof}
Denote the projection of $\tilde{\Lambda}$ by $\Lambda$. $\Lambda$ must have a self-intersection, otherwise it would be of infinite area. Note that the boundary of $\tilde{\Lambda}$ is a geodesic in $\Gssigma$, we know from the discussion above that it projects to a simple geodesic. Therefore, $\Lambda$ could not have a transverse self-intersection. If $\Lambda$ is not an annulus, then the only possibility is that the strip has an overlap with itself, see Figure \ref{horizontal translation}. Now consider all the lifts of this overlapped strip. Every single lift has an overlap with another two lifts. Thus, the preimage, which is also the union of all lifts, is isometric to an Euclidean plane. For a flat half strip, we can similarly obtain an Euclidean half plane. In both cases the preimage is of infinite diameter and it would contain the fundamental domain of the surface. It implies that the surface is Euclidean and contains no cone point, which is impossible. Therefore, $\Lambda$ must be an annulus.
\end{proof}
\begin{figure}[h]
    \centering
    \begin{tikzpicture}

    \draw[->] (-2,0) to [out=90,in=90] (2,0) to [out=-90,in=-90] (-1.5,0);
    \draw[->] (-1,0) to [out=90,in=90] (1,0) to [out=-90,in=-90] (-0.5,0);
    
    \path[pattern=north east lines, pattern color=gray] (-1,0)--(-2,0) to [out=90,in=90] (2,0) to [out=-90,in=-90] (-1.5,0)--(-0.5,0) to [out=-90,in=-90] (1,0) to [out=90,in=90] (-1,0);
    
    \node[below] at(-1.75,0.1) {$\underbrace{\, }$};
    \draw[->] (-1.75,-0.25)--(-2,-0.8);
    \node[align=center, below] at(-2,-0.8) {overlap};
    \end{tikzpicture}
    \caption{Self-intersection of a flat strip with overlapping.}
    \label{horizontal translation}
\end{figure}
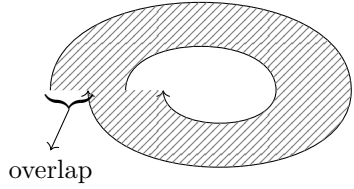

%A Liouville current $L$ is called $recurrent$ if for any pair of endpoints $(x,y)$ in $\supp$, $\overline{\bigcup_{g\in\pi_1(S)}(g.x,g.y)}$ does not admit a proper closed $\pi_1(S)-$invariant subset.

\iffalse
\begin{lemma}\label{dense recurrent}
Fix a geodesic $\tilde{h}\in\Gssigma$, let $\tilde{A} = \overline{\bigcup_{g\in\pi_1(S)}g.\tilde{h}}$. If the Liouville current is simple and recurrent, then the $\pi_1(S)$-orbit of any leaf of $\tilde{A}$ is dense in $\tilde{A}$.
\end{lemma}
\begin{proof}
For a flat geodesic which is a lift of a closed curve in $S$, it's obvious because the $\pi_1(S)$-orbit of the endpoints is a discrete closed set. Now we only focus on flat geodesics whose projection are noncompact.  It suffices to show that for any leaf $\tilde{j}\subset\tilde{A}$, the closure of its $\pi_1(S)$-orbit contains $\tilde{h}$. Suppose the endpoints of $\tilde{h}$ are $(x,y)$, and the endpoints of $\tilde{j}$ are $(z,w)$. Since the Liouville current is recurrent, there exists a sequence $g_n\in\pi_1(S)$ such that the sequence $(g_n.z,g_n.w)$ converges to $(x,y)$. Apply Lemma \ref{closed}, there exists a subsequence $g_{n_k}$ such that $g_{n_k}.\tilde{j}$ converges to a leaf $\tilde{k}$ with endpoints $(x,y)$. If $\tilde{k}=\tilde{h}$, we are done. Otherwise, $\tilde{k}$ and $\tilde{h}$ bound a flat strip. By Lemma \ref{strip}, $\tilde{h}$ must project down to a closed curve. Contradiction.
\end{proof}
\fi

\begin{prop}\label{nonsingular}
For a flat cone surface $(S,\sigma)$ whose cone angles are integer multiples of $\pi$, if the Liouville current is totally simple, then there exists a geodesic $h$ in $S$ such that it is simple, nonsingular, not closed, and every leaf in the closure of $h$ contains at most one cone point.
\end{prop}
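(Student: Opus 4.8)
The plan is to start from an arbitrary geodesic $\Tilde{h}\in\Gssigma$ whose projection to $S$ is noncompact, form the foliated closed set $\Tilde{A}=\overline{\bigcup_{g\in\pi_1(S)}g.\Tilde{h}}$ and its projection $A$, and then show that one can choose $\Tilde{h}$ (or modify it within $A$) so that no leaf of $\Tilde{A}$ passes through more than one cone point. The first observation is that, by Lemma~\ref{dense recurrent}, under the simpleness and recurrence hypotheses the $\pi_1(S)$-orbit of \emph{any} leaf of $\Tilde{A}$ is dense in $\Tilde{A}$; in particular all leaves of $\Tilde{A}$ are ``equivalent'' and it suffices to produce a single leaf with the desired property, since the set of cone points on a leaf is a $\pi_1(S)$-equivariant piece of data whose cardinality is controlled along orbit closures.

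The key step is a counting/genericity argument. Recall from \textsection1.1 that $\Gtsigma$ (geodesics containing at least two cone points) has only countably many elements, and correspondingly $\Gts$ is a countable subset of $\Gs$. The leaves of $\Tilde{A}$ lying in $\Gtsigma$ therefore form a countable subfamily; I would argue that this countable family cannot be all of $\Tilde{A}$. Indeed, $A$ is the closure of a single flat geodesic carrying a well-defined line field on its nonsingular locus, so $\Tilde{A}$ is a lamination-like foliated set whose leaf space, transverse to the foliation, is uncountable (a transverse arc meets uncountably many distinct leaves — this uses that $A$ has nonempty interior-transverse structure, or failing that, that the leaves accumulate in a Cantor-like transversal). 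Since only countably many leaves can lie in $\Gtsigma$, uncountably many leaves contain at most one cone point; pick such a leaf $\Tilde{h}'$ and let $h$ be its projection. By construction every leaf in the closure of $h$ lies in $\Tilde{A}$, and I must still rule out the bad leaves; this is where density (Lemma~\ref{dense recurrent}) does the work, together with the following rigidity: if some leaf $\Tilde{k}$ of $\Tilde{A}$ contained two cone points, then by density its $\pi_1(S)$-translates would accumulate on every leaf, and in particular a limit of such translates would again be a geodesic through (a limit configuration of) two cone points lying in $\Tilde{A}$; but limits of geodesics through a fixed pair of cone points stay in the countable set $\Gtsigma$ or else degenerate, and one shows the leaf $\Tilde{h}'$ chosen generically avoids being such a limit. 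A cleaner route: choose $\Tilde{h}$ from the start inside $T^1_p\Tilde{S}$-type constructions as in \textsection2.2 is not available here since $h$ is noncompact, so instead I would phrase it as: the subset of $\Tilde{A}$ consisting of leaves that are themselves in $\Gtsigma$ \emph{or} asymptotic to a leaf in $\Gtsigma$ is still countable (countably many pairs of cone-point endpoints, countably many flat strips by Lemma~\ref{strip}), hence its complement in $\Tilde{A}$ is nonempty and $\pi_1(S)$-invariant, and a leaf $\Tilde{h}$ in the complement has a closure all of whose leaves avoid containing two cone points.

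Once the generic leaf $\Tilde{h}$ is fixed with projection $h$, it remains to arrange that $h$ itself is \emph{nonsingular}, i.e. contains no cone point at all, rather than merely at most one. But a geodesic containing exactly one cone point is, by Proposition~\ref{saddle}, determined by that cone point together with a choice of unit vector and a one-time angle switch; there are only countably many such in each direction, so removing this countable set from the uncountably many candidate leaves still leaves uncountably many, and among those we find a nonsingular $\Tilde{h}$ whose projection $h$ is the desired nonsingular noncompact flat geodesic. Finally, that \emph{every} leaf in the closure of $h$ has at most one cone point follows because that closure is exactly $A$ (or a sub-foliated-set thereof), which by the above contains no leaf through two cone points.

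The main obstacle I expect is making precise the claim that ``$\Tilde{A}$ has uncountably many leaves,'' i.e.\ that the transversal to the foliated set $\Tilde{A}$ is uncountable. If $h$ is a closed curve this fails outright (hence the reduction to the noncompact case), and even for noncompact $h$ one must use recurrence to guarantee that $A$ is not just a countable union of a geodesic and its cone-asymptotic relatives but genuinely accumulates transversally — this is precisely where Lemma~\ref{dense recurrent} and the non-existence of proper invariant subsets are essential, and where the bookkeeping of cone-asymptotic versus flat-strip versus ``infinitely-many-cone-points'' asymptoticity from Lemma~\ref{asymptotic rays} has to be handled carefully.
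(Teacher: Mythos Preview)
Your proposal contains a genuine gap. You fix an initial geodesic $\Tilde{h}$, form $\Tilde{A}=\overline{\bigcup_g g.\Tilde{h}}$, and then try to locate a ``good'' leaf inside this fixed $\Tilde{A}$. But Lemma~\ref{dense recurrent} works against you here: the $\pi_1(S)$-orbit of \emph{any} leaf of $\Tilde{A}$ is dense in $\Tilde{A}$, so the closure of the projection of \emph{any} leaf is the same set $A$. Consequently, if $A$ happens to contain even one leaf through two cone points, no choice of leaf inside $\Tilde{A}$ can avoid that bad leaf appearing in its closure. Your ``cleaner route'' (passing to the complement of a countable subfamily of leaves) does not escape this: that complement still sits inside $\Tilde{A}$, and each of its leaves still has orbit closure equal to $\Tilde{A}$, hence projection-closure equal to $A$. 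The only way forward is to vary $\Tilde{A}$ itself, i.e.\ to vary the initial $\Tilde{h}$ across \emph{different} orbit closures --- and your framework provides no mechanism for doing so.

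The paper takes precisely the approach you dismiss. It fixes a nonsingular point $p\in\Tilde{S}$ and parametrizes candidate geodesics by the circle $T^1_p\Tilde{S}$; varying the direction $v$ varies the geodesic $\Tilde{h}_v$ and hence the orbit closure $\Tilde{A}_v$. The key observation is that if $\Tilde{A}_v$ contains a leaf $\Tilde{k}\in\Gtsigma$, then (by Lemma~\ref{dense recurrent}) the orbit closure of $\Tilde{k}$ equals $\Tilde{A}_v$, which passes through $p$ with direction $v$; hence $v$ is uniquely determined by $\Tilde{k}$, and each of the countably many elements of $\Gtsigma$ rules out at most one direction. Together with the separately countable sets of directions giving singular or closed geodesics, only countably many directions are excluded, so uncountably many good ones remain. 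Your remark that the $T^1_p\Tilde{S}$ construction ``is not available here since $h$ is noncompact'' is a misreading: noncompactness is one of the conditions being \emph{arranged} via this countability argument, not an obstruction to the parametrization.
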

\begin{proof}
Fix a nonsingular point $p$ in the universal covering $\tilde{S}$, there are uncountably many geodesic in $\Gssigma$ passting through $p$. Thus it suffices to show that there are at most countably many geodesics satisfying one of the following conditions:
\begin{enumerate}
    \item[(1)] it contains at least one cone point
    \item[(2)] it descends to a closed geodesic
    \item[(3)] one leaf of the foliation is in $\Gts$.
\end{enumerate}

For (1), use the fact that there are only countably many cone points and countably many geodesics in $\Gts$. For (2), if a geodesic descends to a closed geodesic, then it must connect $p$ and $g.p$ for some $g\in\pi_1(S)$. Thus, there are only countably many such geodesics. For (3), different geodesics passing through $p$ give different foliated sets, and there is no common leaf for any two of such foliated sets. Since there are only countably many geodesics in $\Gts$, geodesics satisfying (3) are at most countable.
\end{proof}

\iffalse
\begin{rmk}
If a geodesic descends to a closed geodesic, then it must connect a point $p$ and $g.p$ for some $g\in\pi_1(S)$. But the converse may not be true, as a geodesic might be cone-asymptotic to itself.
\end{rmk}
\fi

\subsection{Existence of a dense flat geodesic}

Take a flat geodesic $h$ satisfying requirements in Proposition \ref{nonsingular}, let $A$ be its closure with the foliation structure. We want to apply Theorem \ref{principal} to $\gamma(A)$ to get some information about boundary geodesics. But first of all we need to show there are no closed leaves in $\gamma(A)$ or in $A$. Suppose $j$ is a closed flat geodesic. Since it could be approximated by the nonsingular geodesic $h$, the lift $\tilde{j}$ of $j$ must be in $\Gssigma$. Recall that in Proposition \ref{saddle}, we give the equivalent condition for a geodesic lying in $\Gssigma$. Now in this case, the side where the angle is $\pi$ at cone points cannot switch, otherwise there would be a transverse intersection of $\tilde{j}$ and some lift of $h$. Then there exists an open flat strip on the side of $\tilde{j}$ where all angles are $\pi$, because in the surface $S$, the minimal distance between $j$ and a cone point not in $j$ is strictly positive. Since $\tilde{j}$ is approximated by lifts of $h$, one of the lifts must intersect this flat strip and it must be parallel to $\tilde{j}$. But geodesics parallel to $\tilde{j}$ will descend to a closed geodesic. Contradiction.

\begin{thm}\label{dense}
If the Liouville current is totally simple, then there exists a nonsingular simple flat geodesic whose closure is the entire surface $S$. 
\end{thm}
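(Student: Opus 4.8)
The plan is to start from the nonsingular noncompact flat geodesic $h$ produced by Proposition~\ref{nonsingular}, whose closure $A$ is a foliated closed set all of whose leaves are noncompact (as shown just above the statement) and contain at most one cone point. By Lemma~\ref{dense recurrent}, the $\pi_1(S)$-orbit of every leaf of $\tilde A$ is dense in $\tilde A$, so $A$ is a minimal set for this flat geodesic foliation. The goal is to show $A = S$. I would argue by contradiction: assume $A \neq S$, so the complementary region $S \setminus A$ is nonempty and open.

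The key idea is to transfer the problem to the hyperbolic world via the construction of Section~3.2. Fix an auxiliary hyperbolic metric on $S$; then $\gamma(A)$ is a geodesic lamination by the theorem in Section~3.2, and by simplicity it has no transverse self-intersections. Since the leaves of $\tilde A$ are noncompact, $\gamma(A)$ should contain no closed leaves (a closed leaf of $\gamma(A)$ would force a periodic pair of endpoints, hence by recurrence a closed flat leaf, contradicting noncompactness of all leaves of $A$ together with minimality). So Theorem~\ref{principal} applies: every principal region $P$ of $\gamma(A)$ is either a finite-sided ideal polygon or is, outside a compact piece $P_0$, a finite union of crowns. The plan is then to pull this structure back: a principal region of $\gamma(A)$ corresponds to a complementary region of $A$ in $S$ (here one must use that the endpoint map $\partial$ and the passage $\tilde h \mapsto \gamma(\tilde h)$ are compatible with the foliation/lamination boundary structure, via Lemma~\ref{boundary leaf}), and each such flat complementary region is bounded by flat boundary leaves of $A$. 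Each flat boundary leaf, being a leaf of $\tilde A$, lies in $\Gss$ and (by Proposition~\ref{nonsingular}) carries at most one cone point, so the boundary of a complementary flat region is made of saddle connections and nonsingular geodesic rays emanating from cone points at angle $\pi$.

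Now I would derive a contradiction from the geometry of such a flat complementary region together with recurrence. The flat region is open, bounded by finitely many flat leaves (in each crown end, the boundary leaves accumulate onto a closed flat leaf $g$ playing the role of the crown's closed boundary). But we have just shown $A$ has no closed leaves, so the crown alternative is excluded, leaving only the ideal-polygon alternative: every principal region of $\gamma(A)$ is a finite-sided ideal polygon, and correspondingly every flat complementary region is bounded by finitely many flat boundary leaves that pairwise share cone-point endpoints in a cyclic pattern — i.e.\ the complementary flat region is a "flat ideal polygon" whose ideal vertices are cone points and whose sides are boundary leaves meeting at angle $\pi$ on the outside. Such a region, being a convex-type flat piece bounded by finitely many geodesics, has finite area; running a Gauss–Bonnet computation (doubling across the boundary as in the proofs of Lemmas~\ref{orthogonal segment} and~\ref{asymptotic rays}) together with the nonpositive curvature hypothesis should force the region to degenerate — the cone angles $\geq 2\pi$ contribute nonpositive concentrated curvature, which is incompatible with the region being a genuine open complementary piece. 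Alternatively, and perhaps more cleanly, one uses recurrence directly: a flat boundary leaf $\tilde h$ of such a region is not in a flat strip (by Lemma~\ref{strip} a flat strip projects to an annulus, so its boundary leaf would be closed, excluded), hence by Lemma~\ref{boundary leaf} $\gamma(\tilde h)$ is a genuine boundary geodesic of $\gamma(A)$; but recurrence makes $\gamma(A)$ minimal as a lamination, and a minimal lamination filling the support has complementary regions that are ideal polygons — and a boundary leaf of an ideal-polygon complementary region of a \emph{minimal} lamination is a non-isolated leaf, contradicting $\tilde h$ being a boundary leaf unless $A = S$.

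The main obstacle I expect is the careful bookkeeping in the correspondence between the flat foliated set $A$ and the hyperbolic lamination $\gamma(A)$: specifically, controlling complementary regions under the map $\tilde h \mapsto \gamma(\tilde h)$ when cone-asymptotic leaves are present (the statement of the boundary-leaf lemma already flags that some points of a leaf may be boundary points and some not), and ruling out pathologies where several flat leaves collapse to a single hyperbolic geodesic or where a complementary region of $\gamma(A)$ fails to lift to a single complementary region of $A$. Handling the cone-asymptotic case, and making precise that "no closed leaves" really does transfer both ways, is where the real work lies; the Gauss–Bonnet / minimality endgame is comparatively routine once the dictionary is set up.
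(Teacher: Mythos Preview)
Your overall strategy—pass to the hyperbolic lamination $\gamma(A)$, invoke Theorem~\ref{principal}, and then derive a contradiction from the structure of a putative complementary region—is exactly the paper's route. But both of your key steps contain a genuine gap, and they have a common cause: you never use the constraint, coming from Proposition~\ref{nonsingular}, that \emph{every} leaf of $A$ contains at most one cone point.

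For the crown exclusion, your claim that the crown forces a closed leaf is false. The closed boundary $g$ of a crown lies in the principal region $P$, not in the lamination; and the hyperbolic boundary leaves $\tilde\gamma_n$ do not accumulate on $\tilde g$ (under iteration of the deck transformation both endpoints of $\tilde\gamma_n$ collapse to a single fixed point, so no geodesic limit is produced). Minimality of $\gamma(A)$ does not help either: minimal laminations that fail to fill do have crown-type complementary regions. The paper's argument is different: adjacent flat boundary leaves $\tilde h_i,\tilde h_{i+1}$ are asymptotic, and by Lemma~\ref{asymptotic rays} together with Lemma~\ref{strip} and the at-most-one-cone-point constraint they must be cone-asymptotic, hence share a cone point $\xi_i$. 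Since $\tilde h_{i+1}$ carries \emph{only one} cone point, $\xi_i=\xi_{i+1}$; inductively all $\tilde h_i$ pass through the same $\xi$. For a crown this contradicts $\tilde h_n=g.\tilde h_0$ with $g.\xi\neq\xi$.

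The same observation undercuts your polygon endgame. Your picture of a ``flat ideal polygon'' with distinct cone-point vertices cannot occur: all boundary leaves pass through a single cone point $\xi$, so they form a geodesic chain emanating from $\xi$, not a compact polygon, and there is no finite-area region on which to run Gauss--Bonnet. Your minimality alternative also fails: a boundary geodesic of an ideal-polygon complementary region of a minimal lamination is approached from one side only, so there is no contradiction with $\gamma(\tilde k)$ being a boundary geodesic. The paper finishes instead by noting that each ray $\xi x_i$ of the boundary leaf $\tilde k$ is also a subray of the adjacent leaf in the chain, which is approached from the \emph{other} side; hence every nonsingular point of $\tilde k$ is accumulated from both sides, contradicting the definition of a boundary leaf.
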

\begin{proof}
Using Proposition \ref{nonsingular}, we can find a nonsingular nonclosed flat geodesic $h$ such that all leaves in its closure $A$ contain at most one cone point. In the above discussion, we know that the corresponding geodesic lamination contains no closed geodesic and we can apply Theorem \ref{principal}

We claim that all principal regions of $\gamma(A)$ are ideal polygons. Otherwise, suppose $P$ is a principal region which is the union of a compact set and some crown sets. Take a lift of one crown set. As in the proof of Theorem \ref{principal}, let $\{\ldots,\tilde{\gamma}_0,\tilde{\gamma}_1,\ldots\}$ be the sequence of adjacent boundary edges and $\tilde{\gamma}_n = g.\tilde{\gamma}_0$ for some $g\in\pi_1(S)$. Note that each $\tilde{\gamma}_i$ comes from a leave, denoted by $\tilde{h}_i$. $\tilde{h}_i$ and $\tilde{h}_{i+1}$ are asymptotic, so they must be in one of the three cases in Lemma \ref{asymptotic rays}. But $\tilde{h}_i$ and $\tilde{h}_{i+1}$ cannot bound a flat half-strip by Lemma \ref{strip}, and each of them contains at most one cone point, so every adjacent pair of leaves must be cone-asymptotic and share the unique cone point. Consequently, $\tilde{h}_0$ and $\tilde{h}_n$ also share the same cone point. On the other hand, since $\tilde{\gamma}_n = g.\tilde{\gamma}_0$, $g$ must map the cone point on $\tilde{h}_0$ to the cone point on $\tilde{h}_n$. Then $g$ has a fixed point in the universal covering, which is absurd.

Now suppose $A$ is not the entire surface $S$, then the complement is the union of some ideal polygons. Since the complement of $A$ is an open set, $A$ must contain a boundary point and therefore a boundary leaf, denoted by $k$. Its lift $\tilde{k}$ is a boundary leaf of $\tilde{A}$ and it generates a boundary geodesic $\gamma(\tilde{k})$ of the lamination $\gamma(\tilde{A})$ by Lemma \ref{boundary leaf}. $\gamma(\tilde{k})$ is also a boundary edge of a principal region which is an ideal polygon. It´s clear that boundary edges of an ideal polygon come from a sequence of leaves which form a good geodesic chain. But for every nonsingular point of a leaf in a good geodesic chain, it can be approximated from both the positive and the negative side because it belongs to two adjacent leaves. See an example in Figure \ref{sides}. Thus it cannot be a boundary point and $\tilde{k}$ cannot be a boundary leaf. Contradiction.
\end{proof}

\begin{figure}[h]
    \centering
    \begin{tikzpicture}
    \draw (0,0) circle (2);
    \draw[very thick] (0,2)--(0,0)--(-2,0);
    \draw[dashed] (0,-2)--(0,0)--(2,0);
    
    \draw[color=gray,dotted, thick] (0,2) to [out=-90,in=0] (-2,0) to [out=0,in=90] (0,-2) to [out=90,in=180] (2,0) to [out=180,in=-90] (0,2);
    
    \node[above] at(0,2) {$x_0$};
    \node[left] at(-2,0) {$x_1$};
    \node[below] at (0,-2) {$x_2$};
    \node[right] at (2,0) {$x_3$};
    \node[below right] at(0,0) {$\xi$};
    \end{tikzpicture}
    \captionsetup{width=0.8\textwidth}
    \caption{The leaf $x_0\xi x_1$ is approached by nonsingular leaves from the above left side, while $x_0\xi$ is also approached from the right side as a subray of $x_0\xi x_3$.}
    \label{sides}
\end{figure}
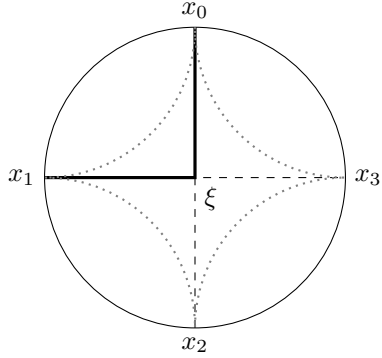

\section{Chains and holonomy}

In this section, we assume that the surface $S$ is equipped with a flat cone metric and the Liouville current is integral and totally simple. In particular, we know from the discussion in § \ref{holonomy and cone angle} that the holonomy is well-defined up to $\pm\mathrm{Id}$. We will show that the holonomy of such a surface is $\pm\mathrm{Id}$. We first define an equivalence relation on $T^1_p(S)$ for every point $p\in S$: two vectors are equivalent if and only if they make an angle $k\pi,k\in\Z$. Each equivalent class is called a \textit{direction} at $p$. To prove the holonomy is $\pm \mathrm{Id}$, it suffices to define a direction at every point which is invariant under parallel transport.

By Theorem \ref{dense}, there exists a simple nonsingular flat geodesic $h$ which is dense in the surface. It equips the surface with a foliation structure. Furthermore, it gives a field of directions by the tangent vectors of leaves. We can lift it to the universal covering and get a $\pi_1(S)$-invariant field of directions in $\tilde{S}$.

Take a geodesic arc $\beta:[0,1]\to \tilde{S}$ connecting two points $p$ and $q$ such that there is no cone point in the interior of this arc ($p$ and $q$ might be cone points). Since $\beta$ is compact, there exists a neighborhood isometric to $[0,1]\times (-\epsilon,\epsilon)$ such that $\beta$ is mapped to the $x$-axis $[0,1]\times \{0\}$. Restrict the field of directions to this arc, then the direction at the point $\beta(t)$ is uniquely determined by the angle between itself and the positive $x$-axis. Denote this angle by $\theta(t)$. The intersection of $\tilde{h}$ and this neighborhood is a set of countably many disjoint subarcs. Denote the subarc passing through $\beta(t)$ by $\tilde{h}_t$.

We claim that $\theta$ is a constant function. If $\beta$ is a subarc of $\tilde{h}$, then $\theta(t)=0$ for all $t$. Thus we only need to consider the case where $\beta$ intersects $\tilde{h}$ transversely. Suppose $\theta$ is not a constant function, then $\lim_{\epsilon\to 0} \frac{1}{\epsilon}(\theta(t_0+\epsilon)-\theta(t_0))$ is nonzero or does not exist for some $t_0$. There exists a sequence $\{\epsilon_n\}$ converging to 0 such that $\frac{1}{\epsilon_n}(\theta(t_0+\epsilon_n)-\theta(t_0))$ converges to a nonzero constant or infinity. In both cases, for sufficiently large $n$, there exists a positive constant $M$ such that $|\theta(t_0+\epsilon_n)-\theta(t_0)|>\epsilon_n M$. Without loss of generality, we may assume $\theta(t_0+\epsilon_n)<\theta(t_0)$ for all $n$.

If there is no cone point nearby, $\tilde{h}_{t_0}$ and $\tilde{h}_{t_0+\epsilon_n}$ would intersect and make an angle $\alpha_n=\theta(t_0)-\theta(t_0+\epsilon_n)$. See Figure \ref{triangle}. Let $d_n$ be the distance between the intersection point and $\beta(t_0)$, $f_n$  be the distance between the intersection point and $\beta(t_0+\epsilon_n)$. Then
\[d_n < \frac{\epsilon_n \sin \theta(t_0)}{\tan(\epsilon_nM)}-\epsilon_n\cos\theta(t_0) \quad f_n<\frac{\epsilon_n\sin \theta(t_0)}{\sin(\epsilon_nM)}
\]
Thus $\{d_n\}$ and $\{f_n\}$ are bounded sequences. Take a constant $C$ bigger than the upper bound of $\{d_n\}$ and $\{f_n\}$.
\begin{figure}[h]
    \centering
    \begin{tikzpicture}[scale=0.6]
    \draw[very thick] (-3,0)--(4,0);
    \draw[dashed] (0,2)--(-1,0) (4,1)--(2,0) (-2.5,-3)--(-2,-2)--(-3,-2.5);
    \draw (-1,0)--(-2,-2) (2,0)--(-2,-2);
    \draw (-1.82,-1.91) arc [radius=0.2, start angle=26.565, end angle=63.435 ];
    \draw (-0.8,0) arc [radius=0.2, start angle=0, end angle=63.435];
    
    \node[above] at(-3,0) {$\beta$};
    \node[above right] at(-1.82,-1.7) {$\alpha_n$};
    \node[above right] at(-0.8,0) {$\theta(t_0)$};
    \node[left] at (-0.3,1.9) {$\tilde{h}_{t_0}$};
    \node[above] at (4,1) {$\tilde{h}_{t_0+\epsilon_n}$};
    \node[left] at(-1.5,-0.9) {$d_n$};
    \node[right] at (0,-1) {$f_n$};
    \end{tikzpicture}
    \caption{The intersection of two arcs without cone points.}
    \label{triangle}
\end{figure}
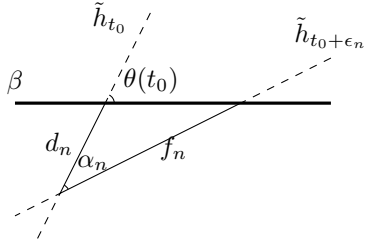

Since $\tilde{h}_{t_0}$ and $\tilde{h}_{t_0+\epsilon_0}$ are subarcs of the same simple flat geodesic, there must be a cone point to avoid intersection. Let $A$ be the point on $\tilde{h}_{t_0}$ such that the distance between $A$ and $\beta(t_0)$ is $C$, $B$ be the point on $\tilde{h}_{t_0+\epsilon_0}$ such that the distance between $B$ and $\beta(t_0+\epsilon_0)$ is $C$, $D = \beta(t_0)$, $C = \beta(t_0+ \epsilon_0)$. The cone point, denoted by $\xi_0$, must lie in the quadrilateral $ABCD$. Similarly, for sufficient large $n$, $\tilde{h}_{t_0}$ and $\tilde{h}_{t_0+\epsilon_n}$ form a more narrow quadrilateral $AB'C'D$ which does not contain $\xi_0$, where $B'C'$ is a geodesic segment in $\tilde{h}_{t_0+\epsilon_n}$. Then there must be a new cone point $\xi_1$ in $AB'C'D$, see Figure \ref{quadrilateral}. Repeat this process, we could find infinitely many cone points in the compact region $\{p\in \tilde{S}\mid{}d(p,\beta)\le C\}$, which contradicts to the fact that cone points are discrete.

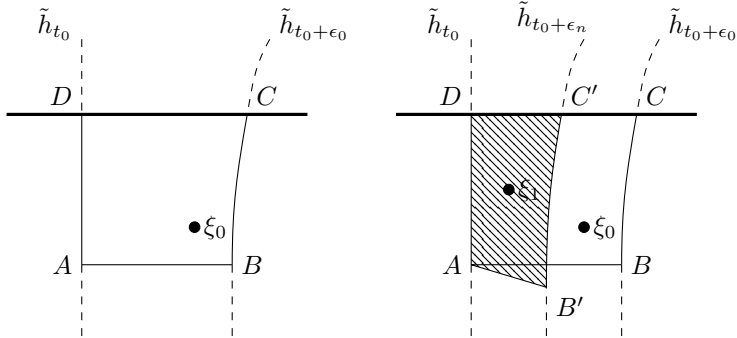
\begin{figure}[h]
    \centering
    \begin{subfigure}{0.4\textwidth}
    \begin{tikzpicture}
    \draw[very thick] (-1,0)--(3,0);
    \draw[dashed] (0,1)--(0,0) (0,-2)--(0,-3) (2,-2)--(2,-3)  (2.5,1) to [out=-120, in=80] (2.2,0);
    \draw (0,0)--(0,-2) (2.2,0) to [out=-100, in=90] (2,-2);
    \draw (0,-2)--(2,-2);
    
    %\path[pattern=north west lines] (0,0)--(1.2,0) to [out=-100, in=90] (1,-2)--(0,-2)--(0,0);
    
    \filldraw (1.5,-1.5) circle (2pt);
    
    \node[above left] at(0,0) {$D$};
    \node[left] at (0,-2) {$A$};
    \node[right] at (2,-2) {$B$};
    \node[above right] at (2.2,0) {$C$};
    %\node[below right] at (1,-2) {$B'$};
    %\node[above right] at (1.2,0) {$C'$};
    %\node[right] at (0.5,-1) {$\xi_1$};
    \node[right] at (1.5,-1.5) {$\xi_0$};
    \node[left] at (0,1.2) {$\tilde{h}_{t_0}$};
    \node[right] at (2.5,1.2) {$\tilde{h}_{t_0+\epsilon_0}$};
    \end{tikzpicture}
    \end{subfigure}
    \begin{subfigure}{0.4\textwidth}
    \begin{tikzpicture}
    \draw[very thick] (-1,0)--(3,0);
    \draw[dashed] (0,1)--(0,0) (0,-2)--(0,-3) (1,-2)--(1,-3) (2,-2)--(2,-3) (1.5,1) to [out=-120, in=80] (1.2,0) (2.5,1) to [out=-120, in=80] (2.2,0);
    \draw (0,0)--(0,-2) (1.2,0) to [out=-100, in=90] (1,-2.3) (2.2,0) to [out=-100, in=90] (2,-2);
    \draw (0,-2)--(2,-2) (0,-2)--(1,-2.3);
    
    \path[pattern=north west lines] (0,0)--(1.2,0) to [out=-100, in=90] (1,-2.3)--(0,-2)--(0,0);
    
    \filldraw (0.5,-1) circle (2pt) (1.5,-1.5) circle (2pt);
    
    \node[above left] at(0,0) {$D$};
    \node[left] at (0,-2) {$A$};
    \node[right] at (2,-2) {$B$};
    \node[above right] at (2.2,0) {$C$};
    \node[below right] at (1,-2.3) {$B'$};
    \node[above right] at (1.2,0) {$C'$};
    \node[right] at (0.5,-1) {$\xi_1$};
    \node[right] at (1.5,-1.5) {$\xi_0$};
    \node[left] at (0,1.2) {$\tilde{h}_{t_0}$};
    \node[right] at (2.5,1.2) {$\tilde{h}_{t_0+\epsilon_0}$};
    \node[left] at (1.7,1.3) {$\tilde{h}_{t_0+\epsilon_n}$};
    \end{tikzpicture}
    \end{subfigure}
    \caption{Quadrilaterals and cone points.}
    \label{quadrilateral}
\end{figure}

Now take two points $p,q$ and consider the parallel transport along some curve joining these two points. Since the flat cone metric is locally flat, we may assume the curve is a concatenation of geodesic arcs. We already know that the angle of the direction is constant along a single arc, thus the parallel transport indeed maps the direction at $p$ to the direction at $q$. In conclusion, the field of direction constructed above is invariant under parallel transport.

Finally we could prove our main theorem:
\begin{thm}\label{main theorem}
Let $(S,\sigma)$ be a closed oriented flat cone surface of genus at least 2. Then $\sigma$ lies in QFlat(S) if and only if the Liouville current is integral and totally simple.
\end{thm}
\begin{proof}
We already prove that if the Liouville current is integral and totally simple, then it is a metric in $QFlat(S)$. For the other direction, if the metric $\sigma$ comes from a holomorphic quadratic differential $q$, it is obvious that the Liouville current must be integral by Theorem \ref{angle and holonomy}. Moreover, a geodesic in $\Gssigma$ is composed of a sequence of trajectories for the same angle $\theta$ and it must be simple. There exists a foliation structure on its closure and every leaf must have the same angle $\theta$. For any two linked pairs of endpoints $(x_1,y_1)$ and $(x_2,y_2)$ in $\Gss$, they correspond to trajectories of different angles and $\overline{\pi_1(S).(x_1,y_1) }\cap \overline{\pi_1(S).(x_2,y_2) }$ must be empty. Thus the Liouville current is totally simple.
\end{proof}

From Theorem \ref{nonsingular} and Theorem \ref{main theorem}, we automatically obtain the following interesting corollary.
\begin{cor}\label{main corollary}
    Given a surface with a quadratic differential and any point $p$, there must exist a nonsingular geodesic passing through $p$ which is dense in the surface. Moreover, if we parameterize the space of nonsingular geodesics passing through $p$ by the angle of tangent vector at $p$, then almost every such geodesic will be dense in the surface.
\end{cor}

\bibliography{ref}

\bibliographystyle{alpha}

% \begin{thebibliography}{}
% \bibitem{ballmann1985manifolds}Ballmann, W. Manifolds of non positive curvature. {\em Arbeitstagung Bonn 1984}. pp. 261-268 (1985)
% \bibitem{bankovic2018marked}Bankovic, A. \& Leininger, C. Marked-length-spectral rigidity for flat metrics. {\em Transactions of the American Mathematical Society}. \textbf{370}, 1867-1884 (2018)
% \bibitem{bonahon1988geometry}Bonahon, F. The geometry of Teichmüller space via geodesic currents. {\em Inventiones Mathematicae}. \textbf{92}, 139-162 (1988)
% \bibitem{bridson2013metric}Bridson, M. \& Haefliger, A. Metric spaces of non-positive curvature. (Springer Science \& Business Media,2013)
% \bibitem{frazier2012length}Frazier, J. Length spectral rigidity of non-positively curved surfaces. (University of Maryland, College Park,2012)
% \bibitem{martelli2016introduction}Martelli, B. An introduction to geometric topology. {\em ArXiv Preprint ArXiv:1610.02592}. (2016)
% \bibitem{otal1990spectre}Otal, J. Le spectre marqué des longueurs des surfaces à courbure négative. {\em Annals Of Mathematics}. \textbf{131}, 151-162 (1990)
% \bibitem{strebel1984quadratic}Strebel, K. Quadratic differentials. {\em Quadratic Differentials}. pp. 16-26 (1984)
% \bibitem{casson1988automorphisms}Casson, A., Casson, A. \& Bleiler, S. Automorphisms of surfaces after Nielsen and Thurston. (Cambridge University Press,1988)
% \end{thebibliography}

\end{document}